\newcommand{\inv}{^{\raisebox{.2ex}{$\scriptscriptstyle-1$}}}   
\newcommand{\add}[1]{\textcolor{Red}{#1}}
\newtheorem{theorem}{Theorem}[section]
\newtheorem{proposition}[theorem]{Proposition}
\newtheorem{lemma}[theorem]{Lemma}
\newtheorem{corollary}[theorem]{Corollary}
\theoremstyle{definition}
\newtheorem{definition}[theorem]{Definition}
\newtheorem{cremark}[theorem]{Concluding Remarks}
\numberwithin{equation}{section}
\begin{document}
	
\author{Themba Dube}

\address{\textsc{T. Dube}: [1] Department of Mathematical Sciences, University of South Africa,
South Africa; [2] National Institute for Theoretical and Computational Sciences (NITheCS), South Africa.}

\email{dubeta@unisa.ac.za}

\author{Amartya Goswami}
\address{\textsc{A. Goswami}:
[1] Department of Mathematics and Applied Mathematics, University of Johannesburg, South Africa; [2] National Institute for Theoretical and Computational Sciences (NITheCS), South Africa.}
\email{agoswami@uj.ac.za}

\title{Some aspects of $k$-ideals of semirings}

\date{}

\subjclass{16Y60.}


\keywords{semiring, $k$-ideal, $k$-closure operation, weakly Noetherian semiring.}

\begin{abstract}
The aim of this paper is to study some distinguished classes of $k$-ideals of semirings, which include $k$-prime, $k$-semiprime, $k$-radical, $k$-irreducible, and $k$-strongly irreducible ideals. We discuss some of  the properties of $k$-ideals and their products, intersections, and ideal quotients under semiring homomorphisms.
\end{abstract}
\maketitle

\section{Introduction}

As algebraic structures, semirings are definitely a natural choice of generalizations of rings, and it is appropriate to ask which properties of rings can be extended to semirings. One may expect semirings always to be extended to rings, but \cite{V39} gave examples of semirings that cannot be embedded in rings. In studying the structures of rings, just as ideals play crucial roles, the same is true for semirings. However, the ideals of semirings are significantly different in nature than those of rings. The lack of subtraction in semirings shows that many results in rings have no counterparts in semirings. The notion of $k$-ideal introduced in \cite{H58} is an attempt to compensate for this lack of subtractivity. Since the introduction of $k$-ideals, several studies have been made (see \cite{AA08, G99, H15, JRT22, L95, L65, L15, SA92, SA93, WSA96}). 

In this paper, we shall touch upon some aspects of $k$-ideals that have not been addressed in the literature. The aim of this and the sequel paper, \cite{DG23} is to contribute further on the theory of $k$-ideals by using results from the above-mentioned works and extending results from rings and semirings. To obtain this, we have lifted results from
\cite{G99, I56} on  ideals of semirings, and from \cite{L01, B72, AM69, A08, B53} on ideals of rings.

The result that plays a key role in studying distinguished classes (call one such as X) of $k$-ideals is the following ``exchange principle'':
\begin{equation}\label{kxkx}
k\text{-X ideal}= k\text{-ideal + X ideal.} 
\end{equation}

For example, to prove the properties of $k$-prime ideals (see Definition \ref{kpri}), it will be sufficient to prove them for prime $k$-ideals. It turns out that all the distinguished classes of $k$-ideals that we have considered here satisfy the above exchange principle. The other tool that has been extensively used in the proofs is the alternative formulation of $k$-ideals in terms of $k$-closure operations (see Definition \ref{adki}).

We shall now briefly describe the content of the paper. In \S \ref{kid}, we recall the definition of the $k$-closure operation and gather some of its properties. We have definition of $k$-ideals in terms of the $k$-closure operations and study some operation on $k$-ideals. The purpose of \S\ref{scki} is to study some distinguished classes of $k$-ideals, which contain $k$-prime, $k$-semiprime, and $k$-radical ideals in \S\ref{ptki}, whereas in \S\ref{itki}, we discuss $k$-irreducible and $k$-strongly irreducible ideals. Finally, in \S\ref{kekc}, we introduce the notions of $k$-contractions and $k$-extensions of $k$-ideals. We conclude this paper with remarks on some further works.

\section{\large{$k$-ideals}}\label{kid}  
A (commutative) \emph{semiring} is a system $(R  ,+,0,\cdot, 1)$ such that $(R  ,+,0)$ is a commutative monoid, $(R  , \cdot,1)$ is a commutative monoid, $0\cdot x=0=x\cdot 0$ for all $x\in R  ,$ and $\cdot$ distributes over $+$. We shall write $x y$ for $x\cdot y.$ By a semiring, in this paper, we mean a commutative semiring with multiplicative identity element $1$.
A \emph{semiring homomorphism} $\phi\colon R  \to R '$ is a map such that $\phi(x+y)=\phi(x)+\phi(y),$ $\phi(xy)=\phi(x)\phi(y),$ $\phi(0)=0$, and $\phi(1)=1$ for all $x,$ $y\in R .$ 
An \emph{ideal} $I $ of a semiring $R $ is an additive submonoid of $R $ such that 
$rx\in I $
for all $x\in I $ and $r\in R  .$ An ideal $I$ is called \emph{proper} if $I\neq R .$ We also use the symbol $0$ to denote the zero ideal of a semiring $R $. If $S$ is a nonempty subset of a semiring $R$, then $\langle S \rangle$ will denote the ideal generated by $S$. 

\begin{definition}
[\cite{H58}] A \emph{$k$-ideal} (or \emph{subtractive ideal}) $I$ of a semiring $R $ is an ideal of $R $ such that $x,$ $x+y\in I$ implies $y\in I$\footnote{Note the typo in \cite{H58}:  In the definition of a $k$-ideal, `$y\in S$' should be `$y\in I$'.}.
\end{definition}

Surely, the zero ideal is a $k$-ideal and is contained in every $k$-ideal of a semiring $R $. We denote by $\mathcal{I}\!(R) $ (respectively by $\mathcal{I}_k(R) $) the set of all ideals (respectively all $k$-ideals) of $R $. A \emph{$k$-proper} ideal is a proper $k$-ideal of $R$.
A $k$-ideal of a semiring can also be defined in terms of a closure operation on $\mathcal{I}\!(R) $. 
\begin{definition}
[\cite{SA92}]
Let $R $ be a semiring.  The \emph{$k$-closure} operation on $\mathcal{I}(R) $  is defined by
\begin{equation}
\label{clkdef}
\mathcal{C}_k(I)=\{r\in R\mid r+x\in I\;\text{for some}\; x\in I\}.
\end{equation} 	
\end{definition}

The next lemma gathers the properties of closure operations needed in the sequel.

\begin{lemma}\label{lclk}
In the following, $I$, $\{I_{\lambda}\}_{\lambda \in \Lambda}$, and $J$ are ideals of a semiring $R $. 
\begin{enumerate}[\upshape (1)]
		
\item\label{iclk} $\mathcal{C}_k(I)$ is the smallest $k$-ideal containing $I$.
		
\item 
$\mathcal{C}_k(0)=0.$
		
\item \label{ckr}
$ \mathcal{C}_k(R) =R .$
		
\item\label{clcl} $\mathcal{C}_k(\mathcal{C}_k(I))=\mathcal{C}_k(I).$
		
\item\label{ijcl} $I\subseteq J$ implies $\mathcal{C}_k(I)\subseteq \mathcal{C}_k(J).$
		
\item \label{clu}
$\mathcal{C}_k(\langle I\cup J\rangle )\supseteq \mathcal{C}_k(I) \cup \mathcal{C}_k(J).$
		
\item\label{arbin} $\mathcal{C}_k\left(\bigcap_{\lambda\in \Lambda}I_{\lambda}\right)=\bigcap_{\lambda\in \Lambda} \mathcal{C}_k (I_{\lambda}).$
		
\item\label{altd} $I$  is a $k$-ideal if and only if $I=\mathcal{C}_k(I).$
		
\item\label{clijk} $\mathcal{C}_k(IJ)\supseteq \mathcal{C}_k(I)\,\mathcal{C}_k(J).$
\end{enumerate}
\end{lemma}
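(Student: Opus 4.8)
The plan is to prove item~(1) first, directly from the defining formula~\eqref{clkdef}, and then to obtain the rest as essentially formal consequences of~(1) together with one short computation for~(9). For~(1), three things need checking. First, $I\subseteq\mathcal{C}_k(I)$, witnessed by taking $x=0$ in~\eqref{clkdef}. Second, that $\mathcal{C}_k(I)$ is an ideal: if $r_1+x_1,\,r_2+x_2\in I$ with $x_1,x_2\in I$, then $(r_1+r_2)+(x_1+x_2)\in I$ with $x_1+x_2\in I$; and if $r+x\in I$ with $x\in I$, then $s(r+x)=sr+sx\in I$ with $sx\in I$ for every $s\in R$; together with $0\in\mathcal{C}_k(I)$ this gives the ideal axioms. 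Third --- the only point needing a small trick --- that $\mathcal{C}_k(I)$ is subtractive: given $a,\,a+b\in\mathcal{C}_k(I)$, pick $x\in I$ with $a+x\in I$ and $y\in I$ with $(a+b)+y\in I$, and set $z:=a+x+y$; then $z\in I$ and $b+z=\bigl((a+b)+y\bigr)+x\in I$, so $z$ witnesses $b\in\mathcal{C}_k(I)$. Minimality then follows: if $K$ is any $k$-ideal with $I\subseteq K$ and $r\in\mathcal{C}_k(I)$, a witness $x\in I\subseteq K$ with $r+x\in I\subseteq K$ forces $r\in K$ by subtractivity of $K$, so $\mathcal{C}_k(I)\subseteq K$.

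With~(1) in hand, the remaining items come quickly. For~(8): $\mathcal{C}_k(I)$ is always a $k$-ideal, so $I=\mathcal{C}_k(I)$ makes $I$ a $k$-ideal; conversely a $k$-ideal $I$ contains the smallest $k$-ideal over it, namely $\mathcal{C}_k(I)$, whence $I=\mathcal{C}_k(I)$. Then~(4) is~(8) applied to $\mathcal{C}_k(I)$, and~(2),~(3) follow likewise ($0$ and $R$ being $k$-ideals), or by inspecting~\eqref{clkdef}. Item~(5) is immediate from~\eqref{clkdef}, since a witness for $r\in\mathcal{C}_k(I)$ is one for $r\in\mathcal{C}_k(J)$ as soon as $I\subseteq J$; then~(6) follows from~(5) because $I,J\subseteq\langle I\cup J\rangle$, and the inclusion $\subseteq$ of~(7) follows from~(5) applied to $\bigcap_{\lambda\in\Lambda}I_\lambda\subseteq I_\mu$. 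For~(9), using that $\mathcal{C}_k(IJ)$ is an ideal it suffices to show $uv\in\mathcal{C}_k(IJ)$ for $u\in\mathcal{C}_k(I)$, $v\in\mathcal{C}_k(J)$; choosing $a\in I$ with $u+a\in I$ and $b\in J$ with $v+b\in J$, the elements $(u+a)b$, $a(v+b)$, $ab$ all lie in $IJ$, hence so does $w:=(u+a)b+a(v+b)+ab=ub+av+3ab$, while $uv+w=(u+a)(v+b)+2ab\in IJ$ since $(u+a)(v+b)\in IJ$; so $w$ witnesses $uv\in\mathcal{C}_k(IJ)$.

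The step I expect to be the main obstacle is the reverse inclusion $\bigcap_{\lambda\in\Lambda}\mathcal{C}_k(I_\lambda)\subseteq\mathcal{C}_k\bigl(\bigcap_{\lambda\in\Lambda}I_\lambda\bigr)$ in~(7). Given $r$ in the left-hand side one has, for each $\lambda$, a witness $x_\lambda\in I_\lambda$ with $r+x_\lambda\in I_\lambda$, and one must assemble a \emph{single} $x\in\bigcap_\lambda I_\lambda$ with $r+x\in\bigcap_\lambda I_\lambda$. The symmetric product manipulations that worked for~(9) do not transfer here: any such combination produces an element in which $r$ survives only inside products $r x_\lambda$, and the absence of subtraction blocks one from recovering the bare $r$. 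This is where I would concentrate; should a common witness resist construction, I would be prepared to find that~(7) needs to be weakened to the single inclusion $\mathcal{C}_k\bigl(\bigcap_\lambda I_\lambda\bigr)\subseteq\bigcap_\lambda\mathcal{C}_k(I_\lambda)$, or to carry an additional hypothesis on the $I_\lambda$.
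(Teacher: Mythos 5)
Your treatment of items (1)--(6), (8) and (9) is correct, and in places it is more solid than what the paper records: for (1) and (8) the paper simply cites \cite{JRT22} and \cite{SA92}, for (2)--(7) it says ``straightforward'', and its own argument for (9) takes an element of $\mathcal{C}_k(I)\,\mathcal{C}_k(J)$ to be a single product $r'r''$, which an element of a product of ideals need not be. Your reduction to the generators $uv$ (legitimate since $\mathcal{C}_k(IJ)$ is an ideal), with the witness $w=(u+a)b+a(v+b)+ab$ and the identity $uv+w=(u+a)(v+b)+2ab$, repairs exactly that defect; likewise your witness $z=a+x+y$ for the subtractivity of $\mathcal{C}_k(I)$ is the essential point of (1), which the paper outsources to a reference.

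The step you singled out as the obstacle is a genuine one: the inclusion $\bigcap_\lambda\mathcal{C}_k(I_\lambda)\subseteq\mathcal{C}_k\bigl(\bigcap_\lambda I_\lambda\bigr)$ fails for general ideals, already for two of them, so item (7) as stated is false and your instinct to retreat to the single inclusion you proved is the right one. Counterexample: in $R=\mathds{N}[r,x,y]$ let $I=\langle x,\,r+x\rangle$ and $J=\langle y,\,r+y\rangle$. Every element of $I$ has the form $fx+g(r+x)$, hence has coefficient $0$ on the monomial $y$ and coefficient on the monomial $x$ at least its coefficient on the monomial $r$; symmetrically, every element of $J$ has coefficient $0$ on the monomial $x$. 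Consequently every element of $I\cap J$ has coefficient $0$ on the monomial $r$, so no polynomial of the form $r+w$ lies in $I\cap J$, giving $r\notin\mathcal{C}_k(I\cap J)$; on the other hand $r\in\mathcal{C}_k(I)\cap\mathcal{C}_k(J)$, witnessed by $x$ and by $y$. (For infinite families the failure is even cheaper: in $(\mathds{N}\cup\{-\infty\},\max,+)$ the ideals $I_n=\{-\infty\}\cup\{m\in\mathds{N}\mid m\geq n\}$ satisfy $\mathcal{C}_k(I_n)=R$ for every $n$, while $\bigcap_n I_n$ is the zero ideal, which is $k$-closed.) So for arbitrary ideals only $\mathcal{C}_k\bigl(\bigcap_\lambda I_\lambda\bigr)\subseteq\bigcap_\lambda\mathcal{C}_k(I_\lambda)$ holds, the equality being trivial when all $I_\lambda$ are $k$-ideals; note also that the paper later invokes the failing direction for arbitrary ideals (e.g.\ in the proof of Proposition \ref{eqsi}), so the defect is not confined to this lemma.
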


\begin{proof}
(1) \cite[Proposition 3.1]{JRT22}. 

(2)--(7) Straightforward. 
	
(8)  \cite[Lemma 2.2]{SA92}. 
	
(9) Let $r\in \mathcal{C}_k(I)\,\mathcal{C}_k(J).$ Then there exist $r'\in \mathcal{C}_k(I)$ and $r''\in \mathcal{C}_k(J)$ such that $r'+i\in I$, $r''+j\in J$, and $r=r'r''$, where $i\in I$ and $j\in J.$ Notice that
\[r'r''+r'j+r''i+ij=(r'+i)(r''+j)\in IJ.\]
Since $r'r''+r'j+r''i\in R $, $r'j+r''i\in R ,$ and $R $ is a $k$-ideal, $r=r'r''\in R $, and this completes the proof
\end{proof}

Thanks to Lemma \ref{lclk}(\ref{altd}), we now have an alternative definition of a $k$-ideal.

\begin{definition}\label{adki}
An ideal $I$ of a semiring $R $ is called a \emph{$k$-ideal} if $\mathcal{C}_k(I)=I.$
\end{definition}

From Lemma \ref{lclk}(\ref{iclk}),  it follows that the $k$-closure is indeed a map
$\mathcal{C}_k\colon \mathcal{I}\!(R) \to \mathcal{I}_k(R) $
defined by (\ref{clkdef}). Our next goal is to study various operations on $k$-ideals of a semiring. It is easy to see that if $\{I _{\lambda}\}_{\lambda\in \Lambda}$ is a family of $k$-ideals, then their \emph{intersection} $\bigcap_{\lambda\in \Lambda} I _{\lambda}$ is also a $k$-ideal. However, the sum of two $k$-ideals of a semiring need not be a $k$-ideal\footnote{\cite[Example 6.19]{G99}: $2\mathds{N}$ and $3\mathds{N}$ are $k$-ideals of the semiring $\mathds{N}$, however $2\mathds{N}+3\mathds{N}=\mathds{N}\setminus \{1\}$ is not a $k$-ideal of $\mathds{N}$.}, but it is so in a lattice ordered semiring (\textit{cf}.
\cite[Corollary 21.22]{G99}). If $I$ and $J$ are $k$-ideals of a semiring  $R$, define  their \emph{product} as $IJ=\mathcal{C}_k(\langle \{xy\mid x\in I, y\in J\}\rangle)$. The following lemma gives us the expected relation between products and intersections of $k$-ideals. The proof is straightforward.

\begin{lemma}\label{psi}
For any two $k$-ideals $I$ and $J$ of a semiring  $R$, we have $IJ\subseteq I\cap J.$
\end{lemma}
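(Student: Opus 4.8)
The plan is to show both containments that together with Lemma~\ref{lclk} yield $IJ\subseteq I\cap J$. Recall that by definition $IJ=\mathcal{C}_k(\langle\{xy\mid x\in I,\,y\in J\}\rangle)$, so it suffices to prove that $\langle\{xy\mid x\in I,\,y\in J\}\rangle\subseteq I\cap J$ and then apply monotonicity of $\mathcal{C}_k$ (Lemma~\ref{lclk}(\ref{ijcl})) together with the fact that $I\cap J$ is already a $k$-ideal (so $\mathcal{C}_k(I\cap J)=I\cap J$ by Lemma~\ref{lclk}(\ref{altd})).

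First I would observe that for any $x\in I$ and $y\in J$ we have $xy\in I$ because $I$ is an ideal (absorbing multiplication by the element $y\in R$), and likewise $xy\in J$ because $J$ is an ideal; hence every generator $xy$ lies in $I\cap J$. Since $I\cap J$ is an ideal of $R$, it contains the ideal generated by any subset of itself, so $\langle\{xy\mid x\in I,\,y\in J\}\rangle\subseteq I\cap J$. Applying $\mathcal{C}_k$ to both sides and using monotonicity gives $\mathcal{C}_k(\langle\{xy\mid x\in I,\,y\in J\}\rangle)\subseteq \mathcal{C}_k(I\cap J)$. Finally, since the intersection of the two $k$-ideals $I$ and $J$ is again a $k$-ideal (noted in the text just before the lemma, and formally $\mathcal{C}_k(I\cap J)=\mathcal{C}_k(I)\cap\mathcal{C}_k(J)=I\cap J$ by Lemma~\ref{lclk}(\ref{arbin}) and (\ref{altd})), the right-hand side is exactly $I\cap J$, and we conclude $IJ\subseteq I\cap J$.

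There is no real obstacle here; the only point requiring a moment's care is remembering that $IJ$ in the $k$-ideal setting is defined as the $k$-closure of the usual ideal product rather than the ideal product itself, so one must invoke the $k$-ideal property of $I\cap J$ (equivalently Lemma~\ref{lclk}(\ref{arbin})) to absorb the extra closure operation. Everything else is the standard ring-theoretic argument that the product ideal sits inside the intersection, transported verbatim to semirings since it uses only the absorption property of ideals and no subtraction.
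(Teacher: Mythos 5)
Your proof is correct and is precisely the routine argument the paper has in mind when it declares the lemma ``straightforward'': generators $xy$ lie in $I\cap J$ by the absorption property, so $\langle\{xy\mid x\in I,\ y\in J\}\rangle\subseteq I\cap J$, and applying monotonicity of $\mathcal{C}_k$ together with the fact that $I\cap J$ is itself a $k$-ideal absorbs the closure. Nothing further is needed.
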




If $X$ is a subset of a semiring $R $, then the \emph{annihilator} of $X$ is defined by \[\mathrm{Ann}_R (X)=\{r\in R\mid rx=0\;\text{for all}\; x\in X\}.\] The following result is proved in \cite[Example 6.10]{G99}.

\begin{proposition}
If $X$ is a nonempty subset of a semiring $R $ and $X\neq \{0\},$ then  $\mathrm{Ann}_R (X)$ is a $k$-ideal.
\end{proposition}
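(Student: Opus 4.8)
The plan is to prove the two defining conditions of a $k$-ideal directly from the definition of the annihilator, since the structure here is simple enough that going through the $k$-closure operation would not simplify matters much (though I note one could equivalently verify $\mathcal{C}_k(\mathrm{Ann}_R(X)) \subseteq \mathrm{Ann}_R(X)$ using Lemma \ref{lclk}(\ref{altd})). First I would check that $A := \mathrm{Ann}_R(X)$ is an ideal: it contains $0$ since $0\cdot x = 0$ for all $x$; it is closed under addition because if $r_1 x = 0 = r_2 x$ for all $x \in X$, then $(r_1 + r_2)x = r_1 x + r_2 x = 0$ by distributivity; and it absorbs multiplication because if $rx = 0$ for all $x \in X$ and $s \in R$, then $(sr)x = s(rx) = s\cdot 0 = 0$. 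This part is entirely routine.

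The substantive point is the subtractivity condition. Suppose $r \in A$ and $r + s \in A$; I must show $s \in A$. Fix an arbitrary $x \in X$. Then $rx = 0$ and $(r+s)x = 0$, so $rx + sx = 0$, i.e. $0 + sx = 0$, whence $sx = 0$. Since $x \in X$ was arbitrary, $s \in A$. The key observation making this work is that, unlike in a general ideal of a semiring, we are not trying to cancel an arbitrary element — we are canceling a $0$, and $rx + sx = 0$ with $rx = 0$ forces $sx = 0$ simply because $0$ is the additive identity; no subtraction is needed.

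I do not anticipate a genuine obstacle here; the statement is essentially immediate once one writes out the definitions, which is presumably why the source (\cite[Example 6.10]{G99}) records it as an example rather than a theorem. The only mild subtlety worth flagging is the hypothesis $X \neq \{0\}$ (and $X$ nonempty): this is needed precisely to guarantee that $\mathrm{Ann}_R(X)$ is a \emph{proper} $k$-ideal when one wants that — if $X = \{0\}$ then $\mathrm{Ann}_R(X) = R$, which is still a $k$-ideal by Lemma \ref{lclk}(\ref{ckr}), so for the bare assertion "is a $k$-ideal" the hypothesis is not strictly necessary, but it is harmless to carry it along. I would therefore present the proof in two short paragraphs: one verifying the ideal axioms, and one verifying subtractivity via the $rx + sx = 0 \Rightarrow sx = 0$ step above.
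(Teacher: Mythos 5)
Your proof is correct; the paper itself gives no argument for this statement but simply cites \cite[Example 6.10]{G99}, and your direct verification (ideal axioms plus the observation that $rx=0$ and $(r+s)x=rx+sx=sx=0$ forces $sx=0$ without any subtraction) is exactly the routine check that reference records. Your side remark that the hypothesis $X\neq\{0\}$ is not needed for the bare conclusion is also accurate, since $\mathrm{Ann}_R(\{0\})=R$ is itself a $k$-ideal.
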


Suppose that $I$ and $J$ are ideals of a semiring $R $.  Then the \emph{ideal quotient} of $I$ over $J$ is defined by \[(I : J)=\{r\in R \mid rJ\subseteq I\}.\]
The following proposition and its corollary give examples of $k$-ideals constructed from ideal quotients.

\begin{proposition}\label{icj}
If $I$ is a $k$-ideal and $J$ is an ideal of a semiring $R $, then $(I:J)$ is a $k$-ideal of $R $.
\end{proposition}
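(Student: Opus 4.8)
The plan is to verify directly from the definition that $(I:J)$ is a $k$-ideal, first checking it is an ideal and then checking the subtractivity condition. That $(I:J)$ is an additive submonoid is routine: if $r_1J\subseteq I$ and $r_2J\subseteq I$, then for $x\in J$ we have $(r_1+r_2)x = r_1x + r_2x \in I$ since $I$ is closed under addition; and $0\in(I:J)$ since $0\cdot x = 0\in I$. Similarly, if $rJ\subseteq I$ and $s\in R$, then $(sr)x = s(rx)\in I$ because $I$ is an ideal, so $sr\in(I:J)$. None of this uses that $I$ is a $k$-ideal.

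The heart of the argument is the subtractivity. Suppose $r\in(I:J)$ and $r+r'\in(I:J)$; I want $r'\in(I:J)$, i.e.\ $r'x\in I$ for every $x\in J$. Fix $x\in J$. Then $rx\in I$ and $(r+r')x = rx + r'x\in I$. Now apply the $k$-ideal property of $I$: since $rx\in I$ and $rx + r'x\in I$, subtractivity of $I$ gives $r'x\in I$. As $x\in J$ was arbitrary, $r'J\subseteq I$, so $r'\in(I:J)$, and this step is exactly where the hypothesis on $I$ is used.

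This is genuinely the entire proof, so there is no serious obstacle; the only thing worth flagging is that one should present the subtractivity step carefully, noting that the element being ``subtracted off'' is $rx$, which lies in $I$ by the assumption $r\in(I:J)$, and that the $k$-ideal axiom is being applied elementwise over $x\in J$ rather than to the ideals directly. Alternatively, one could phrase the subtractivity check via the $k$-closure operation of Definition \ref{adki}, showing $\mathcal{C}_k((I:J))\subseteq (I:J)$: if $r + s\in(I:J)$ for some $s\in(I:J)$, then for each $x\in J$ both $rx+sx$ and $sx$ lie in $I$, whence $rx\in \mathcal{C}_k(I) = I$ by Lemma \ref{lclk}(\ref{altd}); but the direct formulation above is cleaner.
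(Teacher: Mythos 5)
Your proof is correct and follows essentially the same route as the paper: the paper also deduces $r'J\subseteq I$ from $rJ\subseteq I$ and $(r+r')J\subseteq I$ using the subtractivity of $I$, which is just your elementwise argument stated more tersely. Your explicit verification that $(I:J)$ is an ideal is a routine addition the paper omits.
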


\begin{proof}
Let $r,$ $r+r'\in (I\colon J)$. This implies $rJ\subseteq I$ and $rJ+r'J=(r+r')J\subseteq I$. Since $I$ is a $k$-ideal, we have $r'J\subseteq I$, whence $r'\in (I\colon J).$
\end{proof}

\begin{corollary}
Suppose that  $J$, $\{J_{\lambda}\}_{\lambda \in \Lambda}$, $K$ are ideals and  $I$, $\{I_{\omega}\}_{\omega \in \Omega}$ are $k$-ideals of a semiring $R $. Then  $ (I\colon J),$  $((I\colon J)\colon K),$ $(I\colon JK),$ $((I\colon K)\colon J),$ $ (\bigcap_{\omega} I_{\omega}\colon J),$ $ \bigcap_{\omega}(I_{\omega}\colon J),$ $ (I\colon \sum_{\lambda}J_{\lambda}),$ and $ \bigcap_{\lambda}(I\colon J_{\lambda})$ are all $k$-ideals of $R $.
\end{corollary}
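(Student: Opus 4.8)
The plan is to deduce every one of the eight claims from results already established, without reopening the definition of $k$-ideal. The two workhorses are Proposition \ref{icj} (an ideal quotient $(I:J)$ is a $k$-ideal whenever $I$ is a $k$-ideal and $J$ is any ideal) and Lemma \ref{lclk}(\ref{arbin}) (the $k$-closure commutes with arbitrary intersections, which by Lemma \ref{lclk}(\ref{altd}) gives that an intersection of $k$-ideals is a $k$-ideal). The strategy is therefore: first prove a handful of elementary identities on ideal quotients (valid in any semiring, with no $k$-hypothesis), then feed them into Proposition \ref{icj}.

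First I would record the ``curry'' identity $\bigl((I:J):K\bigr)=(I:JK)$ for ideals $I$, $J$, $K$: indeed $r\in\bigl((I:J):K\bigr)$ iff $rK\subseteq(I:J)$ iff $rkj\in I$ for all $k\in K$, $j\in J$, iff $r(JK)\subseteq I$ (using that $JK$ is generated by such products $kj$ and that $I$ is an additive submonoid closed under multiplication by $R$), iff $r\in(I:JK)$. Commutativity of the semiring gives $JK=KJ$, hence also $\bigl((I:K):J\bigr)=(I:KJ)=(I:JK)$. Next I would record $\bigl(\bigcap_{\omega}I_\omega:J\bigr)=\bigcap_{\omega}(I_\omega:J)$: $r(J)\subseteq\bigcap_\omega I_\omega$ iff $rJ\subseteq I_\omega$ for every $\omega$. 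And finally $\bigl(I:\sum_\lambda J_\lambda\bigr)=\bigcap_\lambda(I:J_\lambda)$: if $rJ_\lambda\subseteq I$ for all $\lambda$ then for a finite sum $\sum x_\lambda$ with $x_\lambda\in J_\lambda$ we get $r\sum x_\lambda=\sum rx_\lambda\in I$ since $I$ is additively closed, and the reverse inclusion is immediate because each $J_\lambda\subseteq\sum_\lambda J_\lambda$.

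With these identities in hand the corollary is essentially bookkeeping. Since $I$ is a $k$-ideal and $J$ is an ideal, Proposition \ref{icj} gives that $(I:J)$ is a $k$-ideal; applying Proposition \ref{icj} again with the $k$-ideal $(I:J)$ in the first slot and the ideal $K$ in the second shows $\bigl((I:J):K\bigr)$ is a $k$-ideal, and by the curry identity this equals $(I:JK)$, so that is a $k$-ideal too; symmetrically $\bigl((I:K):J\bigr)=(I:KJ)=(I:JK)$ is a $k$-ideal. For $\bigl(\bigcap_\omega I_\omega:J\bigr)$: each $I_\omega$ is a $k$-ideal, so $\bigcap_\omega I_\omega$ is a $k$-ideal (Lemma \ref{lclk}(\ref{arbin}),(\ref{altd})), whence Proposition \ref{icj} applies; and $\bigcap_\omega(I_\omega:J)$ is an intersection of the $k$-ideals $(I_\omega:J)$, hence again a $k$-ideal. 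Likewise $\bigl(I:\sum_\lambda J_\lambda\bigr)$ is a $k$-ideal by Proposition \ref{icj} (the denominator $\sum_\lambda J_\lambda$ is merely an ideal, no $k$-hypothesis needed), and $\bigcap_\lambda(I:J_\lambda)$ is a $k$-ideal as an intersection of the $k$-ideals $(I:J_\lambda)$.

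I do not anticipate a genuine obstacle here: the only mild subtlety is to make sure the ideal-quotient identities are proved using exactly what a semiring ideal provides — closure under addition and under multiplication by ring elements — and not covertly invoking subtraction, since that is precisely the failure mode that makes $k$-ideals interesting. In particular, the argument never needs $I$, $J$, or the $J_\lambda$ to be $k$-ideals for the set-theoretic identities themselves; the $k$-ness is injected only at the final step through Proposition \ref{icj} and the intersection lemma. So the write-up is short: state the three identities with one-line verifications, then list the eight consequences.
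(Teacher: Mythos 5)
Your proposal is correct and follows exactly the route the paper intends: the corollary is left unproved there precisely because it is meant to follow from Proposition \ref{icj} together with the standard ideal-quotient identities and the fact (noted in \S\ref{kid}) that intersections of $k$-ideals are $k$-ideals. Your care in checking that the quotient identities use only additive closure and $R$-multiplication, never subtraction, is exactly the right point to be explicit about.
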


The lattice of all ideals of a ring is modular, whereas  the same is not true for a semiring. However, we have the following result announced in \cite{H58}. A proof of it can be found in \cite[Proposition 6.38]{G99}.

\begin{proposition}
For every semiring $R $, $\mathcal{I}_k(R) $ is a modular lattice.
\end{proposition}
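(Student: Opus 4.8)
The plan is to verify the three lattice axioms for $\mathcal{I}_k(R)$ directly, working with the meet and join that this poset inherits as a sub-poset of $\mathcal{I}(R)$ ordered by inclusion. First I would fix notation: for $k$-ideals $I$ and $J$, take $I\wedge J=I\cap J$ and $I\vee J=\mathcal{C}_k(\langle I\cup J\rangle)$. By the remark following Definition~\ref{adki}, $I\cap J$ is again a $k$-ideal, and by Lemma~\ref{lclk}(\ref{iclk}) the set $\mathcal{C}_k(\langle I\cup J\rangle)$ is the smallest $k$-ideal containing $I\cup J$; hence these are genuinely the meet and join in $\mathcal{I}_k(R)$, so $\mathcal{I}_k(R)$ is a lattice with largest element $R$ (Lemma~\ref{lclk}(\ref{ckr})) and smallest element $0$. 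The substance of the proposition is the modular law: whenever $I\subseteq K$ for $k$-ideals $I,J,K$, one must show $I\vee(J\wedge K)=(I\vee J)\wedge K$.

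One inclusion is formal and holds in any lattice: since $I\subseteq K$ and $J\wedge K\subseteq K$, we get $I\vee(J\wedge K)\subseteq K$; also $I\vee(J\wedge K)\subseteq I\vee J$; hence $I\vee(J\wedge K)\subseteq(I\vee J)\wedge K$. For the reverse inclusion I would take $r\in(I\vee J)\cap K=\mathcal{C}_k(\langle I\cup J\rangle)\cap K$ and show $r\in\mathcal{C}_k(\langle I\cup(J\cap K)\rangle)$. Unwinding the $k$-closure and the description of $\langle I\cup J\rangle=I+J$ (as an ideal, the ideal generated by $I\cup J$ is just the additive span $I+J$ since each $I,J$ is already an ideal), there exist $a\in I$, $b\in J$ and $a'\in I$, $b'\in J$ with $r+(a+b)=a'+b'$, i.e. $r+a+b=a'+b'$. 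I would then write $r=a'+b'-a-b$ morally, but subtraction is unavailable; instead the key manipulation is to isolate the ``$J$-part'': from $r+a+b=a'+b'\in K$ (using $r\in K$, $a,a'\in I\subseteq K$, so $a+b\in K$ forces $b\in K$ by the $k$-ideal property of $K$? — not quite, we need $a+b\in K$ first). The careful route: since $r\in K$ and $a\in I\subseteq K$, we have $r+a\in K$; then $(r+a)+b=a'+b'$, and $a'\in K$, so adding shows $b+(r+a)+\text{something}\in K$; one manipulates to conclude $b\in\mathcal{C}_k(K)=K$, hence $b\in J\cap K$; and symmetrically handle $b'$. Once $b,b'\in J\cap K$, the relation $r+a+b=a'+b'$ with $a,a'\in I$ and $b,b'\in J\cap K$ exhibits $r\in\mathcal{C}_k(\langle I\cup(J\cap K)\rangle)=I\vee(J\wedge K)$, completing the proof.

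The main obstacle, and the only place real care is needed, is precisely this last absorption step: carrying out the ``cancellation'' of elements of $I\subseteq K$ and of $J$ into the $k$-closure of $K$ without ever subtracting, using only that $K$ is a $k$-ideal (so $x,x+y\in K\Rightarrow y\in K$) and that $I\subseteq K$. Concretely, from $r+a+b=a'+b'$ I have $(r+a')+(a+b)$ is not obviously useful; rather, I should add suitable elements to both sides to bring one side into $K$ and then peel off. For instance, $a+b+r = a'+b'$, and since $a,a',r\in K$ (as $I\subseteq K$, $r\in K$), the element $a+r$ lies in $K$, so $b + (a+r) = a'+b' $ with $a' + b'$ — I want to show $b\in K$, so I rewrite as $b+(a+r)=(a'+b')$; here $a+r\in K$, and if also $a'+b'\in K$ I could conclude $b\in\mathcal{C}_k(K)=K$; but $a'+b'=r+a+b$ need not a priori be in $K$. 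This circularity signals that the genuinely correct argument must instead use the modularity already established for the ideal lattice indirectly, or pass through $\mathcal{C}_k$ more cleverly via Lemma~\ref{lclk}(\ref{arbin}) and (\ref{ijcl}); I would therefore, as the honest fallback, reduce to the known statement: apply the $k$-closure operation to the ordinary modular law $I+(J\cap K)=(I+J)\cap K$ valid in the lattice of ideals under the hypothesis $I\subseteq K$ — wait, that law itself fails for semiring ideals. Given these subtleties, the cleanest plan is simply to \emph{cite} the proof: invoke \cite[Proposition~6.38]{G99} for the modular law, having verified above that meets and joins are as described, and remark that the exchange principle~\eqref{kxkx} is not needed here. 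I expect the referee-facing version of this proof to be short precisely because the delicate modularity computation is outsourced to the cited source.
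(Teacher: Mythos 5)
Your final fallback---identifying the meet as $I\cap J$, the join as $\mathcal{C}_k(\langle I\cup J\rangle)$ via Lemma~\ref{lclk}(\ref{iclk}), and then citing \cite[Proposition 6.38]{G99} for the modular law---is exactly what the paper does: it gives no proof of its own, only the reference to Golan. Your exploratory elementwise argument does break down precisely at the absorption step you flagged (one cannot get $a'+b'\in K$ without already knowing what is to be proved, and no appeal to modularity of $\mathcal{I}(R)$ is available since that lattice need not be modular for semirings), so abandoning it in favour of the citation was the right call and lands you on the paper's own approach.
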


\section{Some classes of $k$-ideals}\label{scki}

The purpose of this section is to study some distinguished classes of $k$-ideals of semirings. These classes of $k$-ideals are obtained by ``restricting'' the usual definitions of the corresponding classes of ideals to $k$-ideals.

\begin{definition}
A proper $k$-ideal of a semiring  $R $ is called \emph{$k$-maximal} if it is not properly contained in another proper $k$-ideal. 	
\end{definition}

The following result from \cite[Corollary 2.2]{SA93} guarantees that the set of $k$-maximal ideals in a semiring is nonempty.

\begin{lemma}\label{exma} 
Every proper $k$-ideal of a semiring  $R $ is contained in a $k$-maximal ideal of $R $.
\end{lemma}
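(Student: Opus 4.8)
The plan is to prove this by a Zorn's Lemma argument, closely paralleling the classical proof that every proper ideal of a ring is contained in a maximal ideal, but with two modifications needed for the semiring/$k$-ideal setting. First I would fix a proper $k$-ideal $I$ and consider the poset $\mathcal{P}$ of all proper $k$-ideals of $R$ containing $I$, ordered by inclusion; this is nonempty since $I \in \mathcal{P}$. A $k$-maximal ideal containing $I$ is precisely a maximal element of $\mathcal{P}$, so it suffices to verify the hypotheses of Zorn's Lemma.

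Next I would take a nonempty chain $\{J_\alpha\}$ in $\mathcal{P}$ and show that $J = \bigcup_\alpha J_\alpha$ is again a member of $\mathcal{P}$. That $J$ is an ideal containing $I$ is routine (directedness of the chain handles closure under addition, and absorption is immediate). That $J$ is a $k$-ideal also follows from directedness: if $x, x+y \in J$, both lie in a common $J_\alpha$, and then $y \in J_\alpha \subseteq J$ since $J_\alpha$ is a $k$-ideal. Properness is the one point requiring a little care: I need $1 \notin J$, equivalently $J \neq R$; since $1 \notin J_\alpha$ for every $\alpha$ (each $J_\alpha$ is proper) and $J$ is the union, $1 \notin J$. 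Thus $J$ is an upper bound for the chain inside $\mathcal{P}$, and Zorn's Lemma yields a maximal element $M$ of $\mathcal{P}$, which is the desired $k$-maximal ideal containing $I$.

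The step I expect to be the only real subtlety — and it is minor — is the reliance on the fact that a proper $k$-ideal is characterized by $1 \notin I$ (or at least that it is stable under unions along chains without collapsing to all of $R$); in a ring one sometimes phrases properness via $1 \notin I$, and the same works here because $1 \in I$ forces $r = r \cdot 1 \in I$ for all $r$. Everything else — the $k$-ideal property passing to unions of chains, and the identification of maximal elements of $\mathcal{P}$ with $k$-maximal ideals — is formal once directedness of the chain is invoked. Since the excerpt attributes the statement to \cite[Corollary 2.2]{SA93}, an alternative is simply to cite that source; but the Zorn's Lemma argument above is self-contained and short.
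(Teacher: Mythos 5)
Your argument is correct. The paper itself offers no proof of this lemma: it simply imports the statement from \cite[Corollary 2.2]{SA93}, so your self-contained Zorn's Lemma argument is a genuine (if standard) alternative to a bare citation. The two semiring-specific points you flag are exactly the right ones, and both go through: the union $J$ of a chain of $k$-ideals is again a $k$-ideal because any pair $x,\,x+y \in J$ lies in a common member of the chain, and properness is preserved under the union because an ideal of a semiring containing $1$ must be all of $R$ (as $r = r\cdot 1 \in I$ by absorption), so ``proper'' is equivalent to ``$1 \notin I$'' here just as for rings. One micro-observation worth making explicit when you identify maximal elements of your poset $\mathcal{P}$ with $k$-maximal ideals: if $M$ is maximal in $\mathcal{P}$ and $N$ is any proper $k$-ideal with $M \subsetneq N$, then automatically $I \subseteq N$, so $N \in \mathcal{P}$ and maximality of $M$ in $\mathcal{P}$ is contradicted; hence $M$ is $k$-maximal in the sense of the paper's definition, not merely maximal among those containing $I$. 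With that one line added, your proof is complete and could replace the citation.
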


As it has been pointed out in the introduction that the exchange principle (\ref{kxkx}) holds for the classes of $k$-ideals that we study, the following proposition is the very first example of it for $k$-maximal ideals.

\begin{proposition}
\label{eqsm}
An ideal $M$ of a semiring $R $ is $k$-maximal if and only if it is a $k$-ideal and a maximal ideal of $R $.
\end{proposition}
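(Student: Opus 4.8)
The plan is to prove both implications directly from the definitions, using Lemma~\ref{exma} for the forward direction and nothing beyond elementary set-theoretic bookkeeping for the converse.

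\medskip

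\emph{Plan.} For the ``only if'' direction, suppose $M$ is $k$-maximal. By definition $M$ is already a proper $k$-ideal, so the only thing to check is that it is maximal as an \emph{ordinary} ideal of $R$. Suppose $M \subsetneq J \subseteq R$ with $J$ an ideal; I must show $J = R$. Apply the $k$-closure operation: by Lemma~\ref{lclk}(\ref{ijcl}) and Lemma~\ref{lclk}(\ref{altd}) we get $M = \mathcal{C}_k(M) \subseteq \mathcal{C}_k(J)$, and $\mathcal{C}_k(J)$ is a $k$-ideal by Lemma~\ref{lclk}(\ref{iclk}). If $\mathcal{C}_k(J)$ were proper, then since $M \subseteq \mathcal{C}_k(J)$ and $M$ is $k$-maximal we would need $M = \mathcal{C}_k(J)$; but $J \subseteq \mathcal{C}_k(J)$ forces $J \subseteq M$, contradicting $M \subsetneq J$. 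Hence $\mathcal{C}_k(J) = R$, and then by Lemma~\ref{lclk}(\ref{ckr}) there is no contradiction there, so I instead argue: $\mathcal{C}_k(J) = R$ means $1 \in \mathcal{C}_k(J)$, i.e. $1 + x \in J$ for some $x \in J$. Hmm — that alone does not give $1 \in J$. So the cleaner route is: if $J \neq R$ then $\mathcal{C}_k(J)$ is a proper $k$-ideal (it cannot be all of $R$, since a proper ideal's $k$-closure can still be proper, but more carefully: actually $\mathcal{C}_k(J)$ proper is exactly what we need, and it is proper whenever $J$ is contained in some proper $k$-ideal). By Lemma~\ref{exma}, $J$ is contained in a $k$-maximal ideal $N$; then $M \subsetneq J \subseteq N$ with $N$ proper, contradicting $k$-maximality of $M$. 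Therefore $J = R$, so $M$ is a maximal ideal.

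\medskip

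For the ``if'' direction, suppose $M$ is both a $k$-ideal and a maximal ideal. It is proper (being maximal), hence a proper $k$-ideal. Suppose $M \subseteq J$ with $J$ a proper $k$-ideal; I must show $M = J$. Since $J$ is in particular an ordinary ideal and $M$ is maximal as an ordinary ideal, either $J = M$ or $J = R$; the latter is excluded since $J$ is proper. Hence $J = M$, so $M$ is $k$-maximal.

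\medskip

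\emph{Main obstacle.} The only subtle point is in the forward direction: one must resist the temptation to conclude $\mathcal{C}_k(J) = R$ from $1 \in \mathcal{C}_k(J)$ directly, since the $k$-closure of a proper ideal need not be $R$. The clean workaround is to invoke Lemma~\ref{exma}: any proper ideal extends to a proper $k$-ideal (first take $\mathcal{C}_k$, which stays proper when the original ideal is proper because it lands inside a $k$-maximal ideal, then enlarge), and a chain $M \subsetneq J \subseteq N$ with $N$ a proper $k$-ideal contradicts $k$-maximality of $M$. Everything else is immediate from the definitions and the already-established properties of $\mathcal{C}_k$.
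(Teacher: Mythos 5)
Your converse direction is fine and matches the paper's (which dismisses it as obvious). The forward direction, however, contains a genuine gap, and it is precisely the point you flagged yourself before working around it: after correctly reducing to the case $\mathcal{C}_k(J)=R$, you invoke Lemma~\ref{exma} to place $J$ inside a $k$-maximal ideal $N$. But Lemma~\ref{exma} applies only to proper \emph{$k$-ideals}, and your $J$ is an arbitrary proper ideal. The bridging claim you need --- that every proper ideal is contained in a proper $k$-ideal, equivalently that the $k$-closure of a proper ideal is proper --- is exactly what you had just observed you could not establish, and your parenthetical justification (``it stays proper because it lands inside a $k$-maximal ideal'') is circular. Worse, the claim is false: in $R=\mathds{N}$ take $J=\mathds{N}\setminus\{1\}$ (the paper's own footnote example of a non-$k$-ideal). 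Since $2\in J$ and $1+2=3\in J$, we get $1\in\mathcal{C}_k(J)$, so $\mathcal{C}_k(J)=\mathds{N}$ and the only $k$-ideal containing $J$ is $\mathds{N}$ itself. Moreover $M=2\mathds{N}$ is $k$-maximal (any $k$-ideal properly containing $2\mathds{N}$ contains some odd number $2m+1$ together with $2m$, hence contains $1$ by subtractivity, hence is all of $\mathds{N}$), yet $2\mathds{N}\subsetneq\mathds{N}\setminus\{1\}\subsetneq\mathds{N}$ shows $2\mathds{N}$ is not a maximal ideal. So the step cannot be repaired for the statement as written: the implication ``$k$-maximal $\Rightarrow$ maximal'' fails in general.

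For comparison, the paper's proof stalls at the very same spot, only dressed differently: it asserts the strict inclusion $\mathcal{C}_k(I)\subsetneq\mathcal{C}_k(R)=R$ and cites only monotonicity, Lemma~\ref{lclk}(\ref{ijcl}), which gives $\subseteq$ but says nothing about properness of $\mathcal{C}_k(I)$; the example above ($M=2\mathds{N}$, $I=\mathds{N}\setminus\{1\}$) violates exactly that strict inclusion. So your instinct about the ``main obstacle'' was correct, and no appeal to Lemma~\ref{exma} or to the closure calculus can dissolve it; the forward implication needs an extra hypothesis (for instance, restricting to semirings in which every ideal is a $k$-ideal, or reformulating the proposition), not a cleverer bookkeeping argument.
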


\begin{proof}
It is obvious that every $k$-ideal of $R $ which is also a maximal ideal is  $k$-maximal. Suppose that  $M$ is a $k$-maximal ideal and $M\subsetneq I\subsetneq R $ for some $I\in \mathcal{I}\!(R)\setminus \mathcal{I}_k\!(R) .$ This implies that
\[M=\mathcal{C}_k(M)\subsetneq I \subsetneq \mathcal{C}_k(I)\subsetneq \mathcal{C}_k(R) =R,
\]
where the first and last equalities follow respectively from (\ref{altd}) and (\ref{ckr}) of Lemma \ref{lclk}, and the first and the last strict  inclusions follow from Lemma \ref{lclk}(\ref{ijcl}). The middle strict inclusion follows from the hypothesis $I\in \mathcal{I}\!(R)\setminus \mathcal{I}_k\!(R)$ and Lemma \ref{lclk}(\ref{altd}).
Since $M$ is $k$-maximal, $M \subsetneq \mathcal{C}_k(I)\subsetneq R$ leads to a contradiction.
\end{proof}

\subsection{$k$-prime and $k$-semiprime ideals}\label{ptki}

In noncommutative semirings, the notions of prime and semiprime ideals play significant roles. In this section, we shall extend those types of ideals  to $k$-prime and $k$-semiprime ideals. The notion of a radical of an ideal will be replaced by $k$-radicals.

\begin{definition}\label{kpri}
A $k$-proper ideal $P$ of a semiring $R $ is called \emph{$k$-prime}  if  $IJ\subseteq P$ implies either $I\subseteq P$ or $J\subseteq P$ for all $I, J\in \mathcal{I}_k(R).$ We denote by $\mathrm{spec}_k(R)$ the set of all $k$-prime ideals of $R$ 
\end{definition}

The exchange principle (\ref{kxkx}) for $k$-prime ideals is proved in   \cite[Proposition 3.5]{JRT22}.

\begin{proposition}
\label{eqp} 
An ideal $P$ of a semiring $R$ is $k$-prime if and only if $P$ is a prime ideal as well as a $k$-ideal of $R$.
\end{proposition}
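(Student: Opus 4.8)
The plan is to prove both implications of Proposition \ref{eqp}, keeping in mind that the definition of $k$-prime already bakes in that $P$ is a $k$-proper ideal, so in each direction I only need to handle the ``prime'' half of the equivalence; the ``$k$-ideal'' half is immediate from Definition \ref{kpri} in one direction and is a hypothesis in the other.

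For the forward implication, suppose $P$ is $k$-prime. By definition $P$ is a $k$-ideal, so it remains to show $P$ is prime as an ordinary ideal, i.e. that for arbitrary $I,J\in\mathcal{I}\!(R)$ with $IJ\subseteq P$ we have $I\subseteq P$ or $J\subseteq P$. The idea is to pass to $k$-closures: from $IJ\subseteq P$ and Lemma \ref{lclk}(\ref{ijcl}) we get $\mathcal{C}_k(IJ)\subseteq\mathcal{C}_k(P)=P$, where the last equality uses Lemma \ref{lclk}(\ref{altd}). Now the product of the $k$-ideals $\mathcal{C}_k(I)$ and $\mathcal{C}_k(J)$ (in the sense defined before Lemma \ref{psi}) is $\mathcal{C}_k(\langle\{xy : x\in\mathcal{C}_k(I),\,y\in\mathcal{C}_k(J)\}\rangle)$; by Lemma \ref{lclk}(\ref{clijk}) this contains $\mathcal{C}_k(I)\,\mathcal{C}_k(J)$ in the ring-theoretic sense, and since $\langle\{xy\}\rangle$ for $x\in\mathcal{C}_k(I), y\in\mathcal{C}_k(J)$ is exactly the ordinary product ideal $\mathcal{C}_k(I)\mathcal{C}_k(J)$, one checks that the $k$-ideal product of $\mathcal{C}_k(I)$ and $\mathcal{C}_k(J)$ equals $\mathcal{C}_k(\mathcal{C}_k(I)\mathcal{C}_k(J))$. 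Using Lemma \ref{lclk}(\ref{ijcl}) and (\ref{clcl}) together with $IJ\subseteq\mathcal{C}_k(I)\mathcal{C}_k(J)$, this $k$-ideal product lies inside $\mathcal{C}_k(IJ)\subseteq P$. Applying $k$-primeness to the $k$-ideals $\mathcal{C}_k(I),\mathcal{C}_k(J)$ gives $\mathcal{C}_k(I)\subseteq P$ or $\mathcal{C}_k(J)\subseteq P$, and since $I\subseteq\mathcal{C}_k(I)$ and $J\subseteq\mathcal{C}_k(J)$ we conclude $I\subseteq P$ or $J\subseteq P$, as desired.

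For the reverse implication, suppose $P$ is both a prime ideal and a $k$-ideal of $R$. Then $P$ is in particular a $k$-proper ideal, so to show $P$ is $k$-prime I just need to verify the defining condition restricted to $k$-ideals. But that condition is weaker than ordinary primeness: if $I,J\in\mathcal{I}_k(R)$ satisfy (in the $k$-ideal product sense) $IJ\subseteq P$, then a fortiori the ordinary product ideal $\langle\{xy : x\in I, y\in J\}\rangle$ is contained in $IJ\subseteq P$ — here using that the $k$-ideal product is the $k$-closure of the ordinary product and that $P=\mathcal{C}_k(P)$ absorbs closures of its subsets via Lemma \ref{lclk}(\ref{ijcl}) and (\ref{altd}). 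Ordinary primeness of $P$ then yields $I\subseteq P$ or $J\subseteq P$, which is exactly what is required.

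The only genuinely delicate point — and the one I would be most careful about — is bookkeeping the two different notions of ``product'' in play: the ordinary product ideal of two ideals, and the $k$-ideal product defined as its $k$-closure. The whole argument hinges on the identity $I\cdot_k J=\mathcal{C}_k(\langle\{xy:x\in I,y\in J\}\rangle)$ together with the monotonicity and idempotence of $\mathcal{C}_k$ from Lemma \ref{lclk}, so I would state clearly at the outset which product is meant at each step and invoke Lemma \ref{lclk}(\ref{ijcl}), (\ref{clcl}), (\ref{altd}), (\ref{clijk}) explicitly. Everything else is routine, and indeed the cited \cite[Proposition 3.5]{JRT22} already records this, so the proof can be kept short.
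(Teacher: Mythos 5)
Your proof is correct, but note that the paper does not actually prove this proposition: it only cites \cite[Proposition 3.5]{JRT22}. Your argument is therefore a self-contained alternative, and it follows exactly the pattern the paper itself uses for the analogous exchange principles (Proposition \ref{eqpp} for $k$-semiprime and Proposition \ref{eqsi} for $k$-(strongly) irreducible ideals): given ordinary ideals $I,J$ with $IJ\subseteq P$, pass to $k$-closures and invoke Lemma \ref{lclk}. The one sentence you should tighten is the justification that the $k$-ideal product $\mathcal{C}_k\bigl(\mathcal{C}_k(I)\mathcal{C}_k(J)\bigr)$ lands in $P$: the inclusion you quote there, $IJ\subseteq\mathcal{C}_k(I)\mathcal{C}_k(J)$, points the wrong way and is not what is needed; what you need (and what your citation of Lemma \ref{lclk}(\ref{clijk}) in fact delivers) is $\mathcal{C}_k(I)\mathcal{C}_k(J)\subseteq\mathcal{C}_k(IJ)$, whence monotonicity and idempotence (Lemma \ref{lclk}(\ref{ijcl}) and (\ref{clcl})) give $\mathcal{C}_k\bigl(\mathcal{C}_k(I)\mathcal{C}_k(J)\bigr)\subseteq\mathcal{C}_k(IJ)\subseteq\mathcal{C}_k(P)=P$. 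With that sentence corrected the forward direction is clean, and the reverse direction is fine as written (the only implicit conventions are that prime ideals are proper, so that $P$ is $k$-proper, and that elementwise and idealwise primeness agree for commutative semirings). Your bookkeeping remark that the choice between the ordinary product and its $k$-closure in Definition \ref{kpri} is immaterial because $P=\mathcal{C}_k(P)$ is exactly the right point to make explicit. Compared with the paper's reliance on \cite{JRT22}, your route has the advantage of avoiding the identity $\langle xy\rangle_k=\langle x\rangle_k\langle y\rangle_k$ (which the paper imports from \cite{JRT22} in the proof of Proposition \ref{kpp}) and of using only the elementary closure properties already recorded in Lemma \ref{lclk}.
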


As in rings (with identity), we also have the following result for $\mathrm{spec}_k(R)$.

\begin{proposition}
\label{kmkp} Every nonzero semiring contains a minimal $k$-prime ideal.
\end{proposition}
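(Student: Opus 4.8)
The plan is to apply Zorn's lemma to the set $\mathrm{spec}_k(R)$ partially ordered by reverse inclusion, so the first task is to check that this set is nonempty and the second is to check that every chain in it admits a lower bound (for inclusion) that is again $k$-prime. For nonemptiness: since $R\neq 0$, the zero ideal is a proper $k$-ideal, so by Lemma \ref{exma} it lies inside some $k$-maximal ideal $M$, and by Proposition \ref{eqsm} this $M$ is simultaneously a maximal ideal and a $k$-ideal. A maximal ideal of a commutative unital semiring is prime---if $ab\in M$ and $a\notin M$, then $M+\langle a\rangle=R$ by maximality, so $1=m+ra$ for some $m\in M$ and $r\in R$, hence $b=bm+r(ab)\in M$---so $M$ is prime as well as a $k$-ideal, and therefore $k$-prime by Proposition \ref{eqp}. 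Thus $\mathrm{spec}_k(R)$ is nonempty.

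The heart of the argument is the claim that if $\{P_\lambda\}_{\lambda\in\Lambda}$ is a chain of $k$-prime ideals (totally ordered by inclusion), then $P:=\bigcap_{\lambda\in\Lambda}P_\lambda$ is again $k$-prime. That $P$ is a $k$-ideal follows from the fact, recorded in \S\ref{kid}, that an arbitrary intersection of $k$-ideals is a $k$-ideal; that $P$ is proper follows because $1\notin P_\lambda$ for every $\lambda$. For primeness, suppose $a,b\in R$ with $ab\in P$ but $a\notin P$ and $b\notin P$, and pick indices $\lambda_1,\lambda_2$ with $a\notin P_{\lambda_1}$ and $b\notin P_{\lambda_2}$. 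Since the family is a chain, one of $P_{\lambda_1},P_{\lambda_2}$ is contained in the other; writing $Q$ for the smaller one, we get $a\notin Q$ and $b\notin Q$, yet $ab\in P\subseteq Q$, contradicting the primeness of $Q$. Hence $a\in P$ or $b\in P$, so $P$ is a prime $k$-ideal, and by Proposition \ref{eqp} it is $k$-prime; it is evidently a lower bound of the chain in $\mathrm{spec}_k(R)$.

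Granting these two points, Zorn's lemma---applied to $\mathrm{spec}_k(R)$ ordered by reverse inclusion, the empty chain being covered by nonemptiness---furnishes an element maximal for reverse inclusion, i.e.\ a $k$-prime ideal minimal with respect to inclusion, which is the asserted minimal $k$-prime ideal of $R$. The one step that genuinely requires care is the claim in the previous paragraph that primeness is inherited by the intersection of a chain: this is exactly where the chain hypothesis enters, since for an arbitrary family of $k$-prime ideals the intersection need not be prime nor comparable with its members. Everything else reduces to the already-available facts about $k$-ideals and the exchange principle of Proposition \ref{eqp}.
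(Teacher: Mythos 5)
Your proof is correct and follows essentially the same route as the paper: show $\mathrm{spec}_k(R)\neq\emptyset$ via a $k$-maximal ideal (Lemma \ref{exma}) which is prime and a $k$-ideal, hence $k$-prime by Proposition \ref{eqp}, and then apply Zorn's lemma to chains ordered by reverse inclusion. The only difference is that you prove in-line what the paper outsources -- primeness of $k$-maximal ideals (cited there from Han's Theorem 3.4, which you rederive via Proposition \ref{eqsm} and the $1=m+ra$ computation) and the ``routine'' Zorn step (intersection of a chain of $k$-prime ideals is $k$-prime) -- so your write-up is a fleshed-out version of the same argument.
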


\begin{proof}
Suppose that  $R$ is a nonzero semiring. It follows from Lemma \ref{exma} that $R $ has a $k$-maximal ideal, which by \cite[Theorem 3.4 ]{H15} is also a prime $k$-ideal and hence is a $k$-prime ideal by \cite[Proposition 3.5]{JRT22}. Therefore, the set $\mathrm{spec}_k(R)$ is nonempty. The claim now follows from a routine application of Zorn's lemma.	
\end{proof}

\begin{proposition}\label{kpp}
\label{eei} For a $k$-prime ideal $P$  of a semiring $R$, the following are equivalent.
\begin{enumerate}[\upshape (1)]
		
\item $xy\in P$ implies either $x\in P$ or $y\in P$ for all $x, y\in $.
		
\item $IJ\subseteq P$ implies either $I\subseteq P$ or $J\subseteq P$ for all $I, J \in \mathcal{I}_k(R)$.
\end{enumerate}
\end{proposition}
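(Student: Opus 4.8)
The plan is to prove the two implications separately, after noting that (2) is in any case just the defining property of a $k$-prime ideal (Definition \ref{kpri}) and so holds automatically; thus the real content is that every $k$-prime ideal also obeys the element-wise condition (1), which will come out of the implication (2)$\Rightarrow$(1).

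For (1)$\Rightarrow$(2), which is routine, I would assume (1) and take $I,J\in\mathcal{I}_k(R)$ with $IJ\subseteq P$, where $IJ=\mathcal{C}_k(\langle\{ab\mid a\in I,\ b\in J\}\rangle)$. If $I\not\subseteq P$, choose $x\in I\setminus P$; then for each $y\in J$ one has $xy\in\langle\{ab\mid a\in I,\ b\in J\}\rangle\subseteq IJ\subseteq P$, so (1) forces $y\in P$, whence $J\subseteq P$.

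For (2)$\Rightarrow$(1), the substantive direction, I would take $x,y\in R$ with $xy\in P$ and put $I=\mathcal{C}_k(\langle x\rangle)$, $J=\mathcal{C}_k(\langle y\rangle)$; these are $k$-ideals by Lemma \ref{lclk}(\ref{iclk}). The key step is to show that the $k$-ideal product $IJ$ lies in $P$. Combining the elementary identity $\langle x\rangle\langle y\rangle=\langle xy\rangle$ for the ordinary product of principal ideals (using that $R$ has an identity) with Lemma \ref{lclk}(\ref{clijk}) gives $\mathcal{C}_k(\langle x\rangle)\,\mathcal{C}_k(\langle y\rangle)\subseteq\mathcal{C}_k(\langle xy\rangle)$; applying $\mathcal{C}_k$ and using parts (\ref{ijcl}) and (\ref{clcl}) of Lemma \ref{lclk} then yields $IJ=\mathcal{C}_k\bigl(\mathcal{C}_k(\langle x\rangle)\,\mathcal{C}_k(\langle y\rangle)\bigr)\subseteq\mathcal{C}_k(\langle xy\rangle)$. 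Since $xy\in P$ and $P=\mathcal{C}_k(P)$ by Lemma \ref{lclk}(\ref{altd}), we get $\mathcal{C}_k(\langle xy\rangle)\subseteq\mathcal{C}_k(P)=P$, so $IJ\subseteq P$, and (2) gives $I\subseteq P$ or $J\subseteq P$, i.e.\ $x\in\langle x\rangle\subseteq I\subseteq P$ or $y\in\langle y\rangle\subseteq J\subseteq P$.

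I expect the only delicate point to be the inclusion $IJ\subseteq\mathcal{C}_k(\langle xy\rangle)$: one has to keep track of the extra $k$-closure built into the product of $k$-ideals and remember that an element of $\mathcal{C}_k(\langle x\rangle)$ need not be a multiple of $x$. If one prefers not to cite Lemma \ref{lclk}(\ref{clijk}), the same inclusion can be obtained by hand: for $a\in\mathcal{C}_k(\langle x\rangle)$, $b\in\mathcal{C}_k(\langle y\rangle)$ write $a+r_1x=r_2x$ and $b+s_1y=s_2y$ (using $\langle x\rangle=Rx$), multiply these relations by $s_1y$ and by $r_1x$ respectively, and use the $k$-ideal property of $P$ to see that $as_1y,\ r_1xb\in P$; then expanding $(a+r_1x)(b+s_1y)=r_2s_2xy\in P$ and applying the $k$-ideal property once more gives $ab\in P$, so $\langle\{ab\mid a\in I,\ b\in J\}\rangle\subseteq P$ and hence $IJ=\mathcal{C}_k(\langle\{ab\mid a\in I,\ b\in J\}\rangle)\subseteq\mathcal{C}_k(P)=P$.
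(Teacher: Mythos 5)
Your argument is correct, and its overall shape matches the paper's: the same easy direction (1)$\Rightarrow$(2), picking $x\in I\setminus P$ and using $xy\in IJ\subseteq P$ for each $y\in J$, and the same reduction of (2)$\Rightarrow$(1) to the principal $k$-ideals $\mathcal{C}_k(\langle x\rangle)$ and $\mathcal{C}_k(\langle y\rangle)$. The one genuine difference is how the key containment in $P$ is justified: the paper invokes the equality $\langle xy\rangle_k=\langle x\rangle_k\langle y\rangle_k$, citing Proposition 3.5 of \cite{JRT22}, whereas you use only the one-sided inclusion $\mathcal{C}_k(\langle x\rangle)\,\mathcal{C}_k(\langle y\rangle)\subseteq\mathcal{C}_k(\langle xy\rangle)$, derived from the paper's own Lemma \ref{lclk}(\ref{clijk}) together with parts (\ref{ijcl}), (\ref{clcl}), (\ref{altd}), or alternatively by your direct computation with $a+r_1x=r_2x$ and $b+s_1y=s_2y$ (which is in effect a principal-ideal instance of the paper's proof of Lemma \ref{lclk}(\ref{clijk})). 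This route is slightly more economical and self-contained: it needs less than the cited equality and stays entirely within the paper's toolkit, at the cost of a few extra lines of closure bookkeeping. Your preliminary observation that (2) is literally Definition \ref{kpri}, so the substance of the proposition is that $k$-prime ideals satisfy the elementwise condition, is accurate and consistent with how the paper uses the result.
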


\begin{proof}
Let $I\nsubseteq P$ and $j\in J$. If $i\in I\setminus P$, then $ij\in IJ\subseteq P$. But $i\notin P$. By hypothesis, this implies that $j\in P$, or in other words, $J\subseteq P.$ Conversely, suppose that  $xy\in P$ and $x\notin P.$ From \cite[Proposition 3.5]{JRT22}, it follows that $\langle xy\rangle_k =\langle x\rangle_k \langle y \rangle_k$, where $\langle x\rangle_k$ is the smallest $k$-ideal of $R $ containing $x$. Therefore, $\langle x\rangle_k \langle y \rangle_k\subseteq P$ and $\langle x\rangle_k \nsubseteq P$. Since  $P$ is  $k$-prime, we must have $\langle y \rangle_k \subseteq P$, whence $y\in P.$ 
\end{proof}

The following proposition is lifted from rings, and the proof is identical to the ring-theoretic version of it, which can be found in \cite[Lemma 3.19]{AK13}.

\begin{proposition}[Prime avoidance lemma]
Let $I$ be a subset of a semiring $R $ that is stable under addition and multiplication, and $P_1, \ldots, P_n$ be $k$-ideals such that $P_3, \ldots, P_n$
are $k$-prime ideals. If $I\nsubseteq P_j$ for all $j,$ then there is an $x \in I$ such that $x \notin
P_j$ for all $j$.
\end{proposition}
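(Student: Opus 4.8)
The plan is to induct on $n$, following the classical ring-theoretic proof of prime avoidance (as in \cite[Lemma 3.19]{AK13}) but with the subtractivity of $k$-ideals playing the role that subtraction plays in a ring. For $n=1$ there is nothing to do: $I\nsubseteq P_1$ says exactly that some $x\in I$ satisfies $x\notin P_1$. So assume $n\geq 2$ and that the statement holds for every family of fewer than $n$ ideals.

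First I would, for each index $i\in\{1,\dots,n\}$, delete $P_i$ and apply the inductive hypothesis to the remaining $n-1$ ideals $\{P_j:j\neq i\}$. After a harmless reindexing this subfamily still has at most two non-$k$-prime members (if $i\leq 2$ only one, if $i\geq 3$ still just $P_1,P_2$), and $I\nsubseteq P_j$ for each $j\neq i$; hence there is $x_i\in I$ with $x_i\notin P_j$ for all $j\neq i$. If some $x_i$ also lies outside $P_i$, it avoids every $P_j$ and we are done, so I may assume $x_i\in P_i$ for all $i$.

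Next I would set $y:=x_1x_2\cdots x_{n-1}+x_n$, which lies in $I$ since $I$ is closed under multiplication and addition, and then verify that $y\notin P_j$ for every $j$. For $j\leq n-1$: since $x_j\in P_j$ and $P_j$ is an ideal, $x_1\cdots x_{n-1}\in P_j$; if $y$ also lay in $P_j$, subtractivity of the $k$-ideal $P_j$ (applied to $x_1\cdots x_{n-1}$ and $y=x_1\cdots x_{n-1}+x_n$) would give $x_n\in P_j$, contradicting $x_n\notin P_j$. For $j=n$: here $x_n\in P_n$, so if $y\in P_n$ then subtractivity of $P_n$ forces $x_1\cdots x_{n-1}\in P_n$; when $n=2$ this is just $x_1\in P_2$, a contradiction, and when $n\geq 3$ the ideal $P_n$ is $k$-prime, so by Proposition \ref{kpp} it contains one of the factors $x_1,\dots,x_{n-1}$, say $x_i\in P_n$ with $i\leq n-1$, again a contradiction. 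Thus $y\in I$ avoids all the $P_j$, completing the induction.

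The only step that is not a routine transcription of the ring argument is the pair of appeals to subtractivity, which replace the identities ``$x_1\cdots x_{n-1}=y-x_n$'' and ``$x_n=y-x_1\cdots x_{n-1}$'' that are unavailable in a semiring; I regard this as the crux. Two bookkeeping points deserve a line in the write-up: that the ``at most two non-$k$-prime'' condition is inherited by each $(n-1)$-element subfamily, and that the index $n$ should be chosen so that, when $n\geq 3$, $P_n$ is one of the guaranteed $k$-prime ideals (for $n=2$ no primeness is needed, since $x_1\cdots x_{n-1}$ is then a single factor).
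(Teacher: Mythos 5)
Your proof is correct and is essentially the argument the paper intends: the paper simply cites the classical ring-theoretic proof of \cite[Lemma 3.19]{AK13} as carrying over verbatim, and your write-up is exactly that induction, with the two uses of subtraction replaced by the subtractivity property of the $k$-ideals $P_j$ and with $k$-primeness of $P_n$ (for $n\geq 3$) used via the elementwise characterization. Nothing further is needed.
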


Recall from	\cite[Definition 2.5]{L15} that a semiring $R$ is called \emph{weakly Noetherian} if every ascending chain of $k$-ideals of
$R $ is ultimately stationary.  If $R $ is such that every ideal of it is a $k$-ideal, then  \cite[Proposition 7.17]{G99} shows
$R$ is weakly Noetherian if and only if every $k$-prime ideal of a semiring $R $ is finitely generated.	Moreover, in a weakly Noetherian semiring $R$, \cite[Corollary 6.6]{L15} says that the radical $\mathcal{R}_k(I)$ of a $k$-ideal $I$ can be represented as a finite intersection of $k$-prime ideals. In a weakly Noetherian semiring, we have another result, but for minimal $k$-prime ideals.

\begin{proposition}
\label{wnknp} 
If $R $ is a weakly Noetherian semiring, then the set of minimal $k$-prime ideals of a semiring $R $ is finite.
\end{proposition}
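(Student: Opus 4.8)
The plan is to mimic the classical ring-theoretic argument (Noetherian rings have finitely many minimal primes over any ideal) adapted to the weakly Noetherian setting, working throughout with $k$-ideals and the $k$-closure operation. First I would argue by contradiction: suppose the set of minimal $k$-prime ideals of $R$ is infinite. The key move is to consider the collection $\mathcal{S}$ of all $k$-ideals $I$ of $R$ such that the set of minimal $k$-prime ideals of $R$ containing $I$ is infinite (equivalently, such that $R/I$, or rather the quotient semiring, has infinitely many minimal $k$-prime ideals). Since the zero ideal is a $k$-ideal (it lies in every $k$-ideal) and the minimal $k$-prime ideals of $R$ are exactly those containing $0$, we have $0\in\mathcal{S}$, so $\mathcal{S}\neq\varnothing$. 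By weak Noetherianity, every ascending chain of $k$-ideals stabilizes, so $\mathcal{S}$ has a maximal element $Q$ with respect to inclusion.

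Next I would show $Q$ cannot be $k$-prime: if it were, then $Q$ itself would be the unique minimal $k$-prime ideal containing $Q$, contradicting that infinitely many do. So $Q$ is not $k$-prime, hence by Proposition~\ref{eqp} (the exchange principle) $Q$ fails to be a prime ideal — there exist $a,b\in R$ with $ab\in Q$ but $a\notin Q$, $b\notin Q$. Now I would form the two strictly larger $k$-ideals $\mathcal{C}_k(\langle Q\cup\{a\}\rangle)$ and $\mathcal{C}_k(\langle Q\cup\{b\}\rangle)$, both of which properly contain $Q$. The crucial claim is that every minimal $k$-prime ideal $P$ containing $Q$ must contain at least one of these two $k$-ideals: since $P\supseteq Q\ni ab$ and $P$ is $k$-prime (hence prime by Proposition~\ref{eqp}), $ab\in P$ forces $a\in P$ or $b\in P$, and in either case $P$ contains $Q$ together with that element, hence contains its $k$-closure. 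Therefore the (infinite) set of minimal $k$-prime ideals containing $Q$ is covered by the minimal $k$-prime ideals containing $\mathcal{C}_k(\langle Q\cup\{a\}\rangle)$ together with those containing $\mathcal{C}_k(\langle Q\cup\{b\}\rangle)$; at least one of these two sets must be infinite, so one of the two strictly larger $k$-ideals lies in $\mathcal{S}$, contradicting maximality of $Q$.

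One technical point I would need to verify carefully is that "minimal $k$-prime ideal containing $Q$" behaves well: specifically that every $k$-prime ideal containing $Q$ contains a $k$-prime ideal minimal among those containing $Q$ — this is the Zorn's lemma argument already invoked in the proof of Proposition~\ref{kmkp}, applied downward inside the nonempty set of $k$-prime ideals above $Q$ (nonempty because $\mathcal{C}_k(\langle Q\cup\{a\}\rangle)$, being $k$-proper since $a\notin Q$... actually one must check properness, or pass to a $k$-maximal ideal over $Q$ via Lemma~\ref{exma}, which is $k$-prime). I also want to confirm that a minimal $k$-prime ideal of $R$ containing $Q$ is minimal among $k$-primes above $Q$, and conversely, so the counting is consistent; since $Q\in\mathcal{S}$ means infinitely many minimal $k$-primes of $R$ contain $Q$, and each such is a fortiori minimal in the interval above $Q$, the bookkeeping goes through.

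The main obstacle I anticipate is the absence of subtraction and of a clean correspondence between $k$-ideals of $R$ and $k$-ideals of a quotient semiring: in rings one simply passes to $R/Q$ and quotes the Noetherian result, but here I must argue directly in $R$, being vigilant that sums of $k$-ideals need not be $k$-ideals (hence the use of $\mathcal{C}_k(\langle Q\cup\{a\}\rangle)$ rather than $Q+\langle a\rangle$) and that every inclusion and strictness claim among these $k$-closures is justified by Lemma~\ref{lclk}, particularly parts~(\ref{iclk}), (\ref{ijcl}), and~(\ref{altd}). Provided these are handled, the contradiction with weak Noetherianity closes the argument.
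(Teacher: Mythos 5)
Your argument is correct, but it is a genuinely different proof from the one in the paper. The paper argues topologically: weak Noetherianity makes $\mathrm{spec}_k(R)$ (with the Zariski topology) a Noetherian space, which therefore has finitely many irreducible components, and these components are exactly the sets $\mathrm{V}(P)$ for $P$ a minimal $k$-prime; finiteness follows at once. You instead run the classical Noetherian-induction (maximal counterexample) argument entirely inside the lattice $\mathcal{I}_k(R)$: take $Q$ maximal among $k$-ideals lying in infinitely many minimal $k$-primes of $R$, use Proposition \ref{eqp} to produce $a,b\notin Q$ with $ab\in Q$, and observe that every minimal $k$-prime over $0$ containing $Q$ must contain $\mathcal{C}_k(\langle Q\cup\{a\}\rangle)$ or $\mathcal{C}_k(\langle Q\cup\{b\}\rangle)$, pushing the infinitude to a strictly larger $k$-ideal and contradicting maximality (here ACC on $k$-ideals is what guarantees the maximal element, and Lemma \ref{lclk}(\ref{iclk}) justifies passing to $k$-closures). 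Your approach is longer but more self-contained: it uses only the exchange principle and the $k$-closure calculus, whereas the paper's two-line topological proof silently relies on facts about $\mathrm{spec}_k(R)$ (that ACC on $k$-ideals makes the space Noetherian, and that irreducible closed sets have the form $\mathrm{V}(P)$, which ultimately rests on Theorem \ref{spkr}) that are not proved or cited there. One small remark on your write-up: with your chosen definition of $\mathcal{S}$ (counting minimal $k$-primes of $R$, i.e.\ minimal over $0$, that contain $I$), the covering argument closes the proof directly, and the ``technical point'' paragraph about primes minimal over $Q$ versus minimal over $0$ is not needed; it would only be needed if you had defined $\mathcal{S}$ via primes minimal over $I$, so you should fix one reading and delete the other to avoid the appearance of a gap.
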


\begin{proof}
We give a topological proof. If $R $ is weakly Noetherian, then the topological space (endowed with Zariski topology) $\mathrm{spec}_k(R) $ is also Noetherian, and thus $\mathrm{spec}_k(R)$ has  finitely many irreducible
components. Now every irreducible closed subset of $\mathrm{spec}_k(R) $
is of the form \[\mathrm{V}(P)=\{Q\in \mathrm{spec}_k(R) \mid P\subseteq Q\},\] where $P$ is a minimal $k$-prime ideal. Thus $\mathrm{V}(P)$ is irreducible component if and only if $P$ is a minimal $k$-prime ideal. Hence, the number of minimal $k$-prime ideals of $R $ is finite.
\end{proof}

We shall now define the notion of  $k$-semiprime ideals. 
It will be shown in Theorem \ref{spkr} that $k$-semiprime ideals are equivalent to $k$-radical ideals (see Definition \ref{drad}).

\begin{definition}\label{kspr}
A $k$-proper ideal $P$ of a semiring $R $ is called \emph{$k$-semiprime} if $I^2\subseteq P$ implies $I\subseteq P,$ for all $k$-ideals $I$ of $R $. 
\end{definition}

Similar to $k$-prime ideals, we also have exchange principle for $k$-semiprime ideals.

\begin{proposition}\label{eqpp}
An ideal $Q$ of a semiring $R $ is $k$-semiprime if and only if $Q$ is a $k$-ideal and a semiprime ideal of $R $.
\end{proposition}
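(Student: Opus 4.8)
The plan is to prove the two implications separately; the substantive one is that a $k$-semiprime ideal is semiprime. Throughout I will use the observation that, for a $k$-ideal $Q$ and ideals $A,B$ of $R$, the inclusion $\mathcal{C}_k\!\left(\langle\{ab\mid a\in A,\ b\in B\}\rangle\right)\subseteq Q$ holds if and only if $\langle\{ab\mid a\in A,\ b\in B\}\rangle\subseteq Q$, which is immediate from Lemma \ref{lclk}(\ref{iclk}) and (\ref{altd}). In other words, when testing containment in a $k$-ideal, the product of $k$-ideals and the ordinary ideal product are interchangeable.

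For the implication ``$\Leftarrow$'', suppose $Q$ is a semiprime $k$-ideal, hence in particular a ($k$-)proper ideal. Let $I$ be a $k$-ideal with $I^{2}\subseteq Q$, the square being the product of $k$-ideals. Then the ordinary square $\langle\{xy\mid x,y\in I\}\rangle$ is contained in $\mathcal{C}_k(\langle\{xy\mid x,y\in I\}\rangle)=I^{2}\subseteq Q$ by Lemma \ref{lclk}(\ref{iclk}), so semiprimeness of $Q$ forces $I\subseteq Q$. Hence $Q$ is $k$-semiprime.

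For the implication ``$\Rightarrow$'', let $Q$ be $k$-semiprime; by Definition \ref{kspr} it is a ($k$-proper) $k$-ideal. Let $I$ be an arbitrary ideal of $R$ whose ordinary square $I^{2}$ lies in $Q$, and set $J=\mathcal{C}_k(I)$, a $k$-ideal containing $I$ by Lemma \ref{lclk}(\ref{iclk}). By Lemma \ref{lclk}(\ref{clijk}) applied with both factors equal to $I$, the ordinary product satisfies $\mathcal{C}_k(I)\,\mathcal{C}_k(I)\subseteq\mathcal{C}_k(I^{2})$; since $I^{2}\subseteq Q$ and $Q$ is a $k$-ideal, parts (\ref{ijcl}) and (\ref{altd}) of Lemma \ref{lclk} give $\mathcal{C}_k(I^{2})\subseteq\mathcal{C}_k(Q)=Q$, so $\mathcal{C}_k(I)\,\mathcal{C}_k(I)\subseteq Q$. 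The product of $k$-ideals $J^{2}$ is by definition the $k$-closure of the ordinary product $\mathcal{C}_k(I)\,\mathcal{C}_k(I)$, so applying $\mathcal{C}_k$ to the last inclusion and using (\ref{ijcl}) and (\ref{altd}) once more yields $J^{2}\subseteq\mathcal{C}_k(Q)=Q$. Now $k$-semiprimeness of $Q$, applied to the $k$-ideal $J$, gives $J\subseteq Q$, whence $I\subseteq J\subseteq Q$. Thus $Q$ is semiprime. (If one prefers the elementwise formulation of semiprimeness, the same argument works verbatim with $I=\langle x\rangle$, using $\langle x\rangle\langle x\rangle=\langle x^{2}\rangle$.)

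The only step where genuine content enters — as opposed to formal manipulation of closures — is the passage from an arbitrary ideal $I$ to the $k$-ideal $\mathcal{C}_k(I)$ while retaining the inclusion of its square in $Q$, which is precisely what Lemma \ref{lclk}(\ref{clijk}) supplies; I expect that to be the crux. Everything else is bookkeeping with the monotonicity and idempotence of $\mathcal{C}_k$ recorded in Lemma \ref{lclk}.
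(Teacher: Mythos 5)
Your proof is correct and follows essentially the same route as the paper: the key step in both is the inclusion $\mathcal{C}_k(I)\,\mathcal{C}_k(I)\subseteq\mathcal{C}_k(I^2)\subseteq\mathcal{C}_k(Q)=Q$ from Lemma \ref{lclk}(\ref{clijk}), (\ref{ijcl}), (\ref{altd}), followed by $k$-semiprimeness applied to $\mathcal{C}_k(I)$ and Lemma \ref{lclk}(\ref{iclk}). You are merely more explicit than the paper about the interchangeability of the ordinary ideal product and the $k$-ideal product when testing containment in a $k$-ideal, which is a welcome clarification but not a different argument.
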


\begin{proof}
Observe that if $Q$ is a $k$-ideal and also a semiprime ideal of a semiring $R $, then by Definition \ref{kspr}, $Q$ is $k$-semiprime. Conversely, let $Q$ be a $k$-semiprime ideal and $I^2\subseteq Q$ for some $I\in \mathcal{I}\!(R) .$ Then
\[(\mathcal{C}_k(I))(\mathcal{C}_k(I))\subseteq \mathcal{C}_k(I^2)\subseteq \mathcal{C}_k(Q)=Q,\]
where the first and second inclusions respectively follow from Lemma \ref{lclk}(\ref{clijk}) and Lemma \ref{lclk}(\ref{ijcl}). Since $Q$ is $k$-semiprime, applying  Lemma \ref{lclk}(\ref{iclk}), we obtain that $I\subseteq Q.$ 
\end{proof}

In rings, it is well known that the definition of the radical of an ideal $I$ can be formulated as the intersection of all prime ideals that contain $I$. If we restrict $I$ to being a $k$-ideal and prime ideals to being $k$-primes, then we obtain the notion of a $k$-radical.

\begin{definition}[\cite{JRT22}]\label{drad}
	
The \emph{$k$-radical of a $k$-ideal $I$} of a semiring $R$ is defined by
\begin{equation}\label{radki}
\mathcal{R}_k(I)=\bigcap_{I\subseteq P}\{P\mid P\in \mathrm{spec}_k(R) \}.
\end{equation}
If $\mathcal{R}_k(I)=I,$ then $I$ is called a \emph{$k$-radical ideal}.	
\end{definition}
In the following lemma we gather some elementary properties of $k$-radicals. 

\begin{lemma}\label{radpr}
In the following, $I$ and $J$ are $k$-ideals of of a semiring   $R$.
\begin{enumerate}[\upshape (1)]
		
\item\label{irad} $\mathcal{R}_k(I)$ is a $k$-ideal containing $I$.

\item $\mathcal{R}_k(\mathcal{R}_k(I))=\mathcal{R}_k(I).$
		
\item\label{ijicj} $\mathcal{R}_k(IJ)= \mathcal{R}_k(I\cap J)= \mathcal{R}_k(I)\cap \mathcal{R}_k(J).$
		
\item\label{irir} $\mathcal{R}_k(I)=R $ if and only if $I=R .$
\end{enumerate}
\end{lemma}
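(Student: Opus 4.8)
The plan is to prove the four parts of Lemma~\ref{radpr} in order, leaning on the characterisation of $\mathcal{R}_k(I)$ as an intersection of $k$-prime ideals and on the elementary properties of $\mathcal{C}_k$ from Lemma~\ref{lclk}. For part~\eqref{irad}: every $k$-prime ideal $P$ with $I\subseteq P$ is in particular a $k$-ideal (Proposition~\ref{eqp}), so $\mathcal{R}_k(I)$ is an intersection of $k$-ideals, hence a $k$-ideal (as noted in \S\ref{kid}); and $I\subseteq P$ for every $P$ in the indexing set, so $I\subseteq\mathcal{R}_k(I)$. One subtlety: if $I=R$ the intersection is empty, so the convention $\mathcal{R}_k(R)=R$ should be invoked here (this also sets up part~\eqref{irir}). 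For part~(2), idempotence: the inclusion $\mathcal{R}_k(I)\subseteq\mathcal{R}_k(\mathcal{R}_k(I))$ is part~\eqref{irad} applied to $\mathcal{R}_k(I)$; for the reverse, note that any $k$-prime $P\supseteq I$ also satisfies $P\supseteq\mathcal{R}_k(I)$ (since $P$ is one of the ideals being intersected), so the intersection defining $\mathcal{R}_k(\mathcal{R}_k(I))$ is over a subset of the $P$'s defining $\mathcal{R}_k(I)$, giving the reverse inclusion. Both indexing families are over the same set of $k$-primes, so in fact the two radicals coincide directly.

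For part~\eqref{ijicj} I would first handle $\mathcal{R}_k(I\cap J)=\mathcal{R}_k(I)\cap\mathcal{R}_k(J)$: a $k$-prime $P$ contains $I\cap J$ iff it contains $IJ$ (by Lemma~\ref{psi}, $IJ\subseteq I\cap J$ gives one direction; for the other, $IJ\subseteq P$ with $P$ $k$-prime forces $I\subseteq P$ or $J\subseteq P$, either of which gives $I\cap J\subseteq P$). This simultaneously shows $\mathcal{R}_k(IJ)=\mathcal{R}_k(I\cap J)$. Then $P\supseteq I\cap J$ and $P$ $k$-prime forces $I\subseteq P$ or $J\subseteq P$; conversely $I\subseteq P$ or $J\subseteq P$ gives $I\cap J\subseteq P$. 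Hence the set of $k$-primes over $I\cap J$ equals $\{P : I\subseteq P\}\cup\{P : J\subseteq P\}$, and intersecting over a union of index sets is the intersection of the two intersections, yielding $\mathcal{R}_k(I)\cap\mathcal{R}_k(J)$. I should be a little careful that $\mathcal{R}_k(IJ)$ here means the radical of the $k$-ideal product $\mathcal{C}_k(\langle\{xy\}\rangle)$, but since that $k$-ideal has exactly the same $k$-prime ideals above it as $\langle\{xy\}\rangle$ does (any $k$-prime containing a set contains its $k$-closure, by minimality of $\mathcal{C}_k$), nothing changes.

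For part~\eqref{irir}: if $I=R$ then $\mathcal{R}_k(R)=R$ by the convention just fixed. Conversely, if $I\subsetneq R$, then $I$ is a proper $k$-ideal, so by Lemma~\ref{exma} it sits inside a $k$-maximal ideal $M$, which by the argument in the proof of Proposition~\ref{kmkp} (via \cite[Theorem~3.4]{H15} and \cite[Proposition~3.5]{JRT22}) is a $k$-prime ideal; then $\mathcal{R}_k(I)\subseteq M\subsetneq R$, so $\mathcal{R}_k(I)\neq R$.

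I expect none of these steps to present a genuine obstacle — the whole lemma is bookkeeping with intersections of $k$-prime ideals. The only point that needs care, and the closest thing to a pitfall, is the edge case $I=R$ (empty intersection) in parts~\eqref{irad} and~\eqref{irir}, which must be covered by an explicit convention $\mathcal{R}_k(R)=R$; the proof of part~\eqref{irir} then becomes a one-line consequence of Lemma~\ref{exma} plus the fact that $k$-maximal ideals are $k$-prime.
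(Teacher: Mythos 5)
Your proposal is correct and follows essentially the same route as the paper: both argue directly from the definition of $\mathcal{R}_k(I)$ as an intersection of $k$-prime (hence $k$-) ideals, with the product--intersection identity in part (3) reduced to comparing the families of $k$-primes above $IJ$, $I\cap J$, $I$, and $J$, and part (4) resting on the existence of a $k$-maximal (hence $k$-prime) ideal above any proper $k$-ideal. Your write-up is simply more explicit than the paper's terse proof (and your attention to the empty-intersection convention $\mathcal{R}_k(R)=R$ is a sensible addition); the only blemish is the sentence in part (2) saying the index family for $\mathcal{R}_k(\mathcal{R}_k(I))$ is a \emph{subset} of that for $\mathcal{R}_k(I)$, when the containment runs the other way, but your concluding observation that the two families coincide repairs this.
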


\begin{proof}
(1) The fact that $I\subseteq \mathcal{R}_k(I)$ follows from Definition \ref{drad}. To show $\mathcal{R}_k(I)$ is a $k$-ideal, let $x, x+y\in \mathcal{R}_k(I).$ This implies $x, x+y\in P$ for all $k$-prime ideals that contain $I$. Since each such $P$ is a $k$-ideal, $y\in P$. Hence $y\in \mathcal{R}_k(I).$

(2) Since by (1), $\mathcal{R}_k(I)$ is a $k$-ideal, applying Lemma \ref{lclk}(\ref{iclk}) gives $\mathcal{R}_k(I)\subseteq \mathcal{R}_k(\mathcal{R}_k(I)).$ Since by (1), $I\subseteq \mathcal{R}_k(I),$ by Definition \ref{drad}, we have $\mathcal{R}_k(I)\supseteq \mathcal{R}_k(\mathcal{R}_k(I)).$
	
(3) Follows from Lemma \ref{lclk}(\ref{ijcl}) and Lemma \ref{psi}.
	
(4) Follows from Definition \ref{drad}. 
\end{proof}


Next, we wish to prove the equivalence between $k$-semiprime ideals and $k$-radical ideals of a semiring. This equivalence is well known to hold between semiprime ideals and radical ideals of (noncommutative) rings and semirings.  In the noncommutative case, we require the notions of $m$-systems and $n$-systems of rings and semirings, whereas in the context of $k$-ideals of commutative semirings, only multiplicatively closed subsets are sufficient. To obtain the equivalence, we first proceed through a series of lemmas.

\begin{lemma}\label{rpms}
A $k$-ideal $P$ of a semiring $R$ is $k$-prime if and only if $R\setminus P$ is a multiplicatively closed subset of $R$.
\end{lemma}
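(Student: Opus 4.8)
The plan is to prove the two implications separately, using the characterization of $k$-prime ideals through single elements (Proposition~\ref{kpp}) together with the ``exchange principle'' (Proposition~\ref{eqp}), which lets us work with a prime $k$-ideal rather than a $k$-prime ideal.

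First I would handle the forward direction. Assume $P$ is a $k$-prime ideal; by Proposition~\ref{eqp} it is in particular a prime ideal, and by Proposition~\ref{kpp} we have that $xy \in P$ forces $x \in P$ or $y \in P$. To show $R \setminus P$ is multiplicatively closed, take $a, b \in R \setminus P$. Since $P$ is proper, $R\setminus P$ is nonempty. If $ab \in P$ then the elementwise characterization would give $a \in P$ or $b \in P$, contradicting the choice of $a,b$; hence $ab \in R \setminus P$, so $R\setminus P$ is multiplicatively closed. (One should also note $1 \in R\setminus P$ since $P$ is proper, though multiplicative closure as defined here does not require containing $1$.)

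Conversely, suppose $P$ is a $k$-ideal and $R \setminus P$ is multiplicatively closed. I first observe $P$ is proper: if $P = R$ then $R\setminus P = \varnothing$, and while the empty set is vacuously multiplicatively closed, the intended reading excludes this degenerate case — alternatively one simply builds properness into the definition of multiplicatively closed subset (nonempty, not containing $0$ if one wishes, or at least $\ne \varnothing$), so I would state that convention explicitly or remark that $P \ne R$ is assumed. Then to verify $k$-primeness it suffices, by Proposition~\ref{kpp}, to check the elementwise condition: if $xy \in P$ but $x \notin P$ and $y \notin P$, then $x, y \in R\setminus P$, so $xy \in R \setminus P$ by multiplicative closure, contradicting $xy \in P$. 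Hence $x \in P$ or $y \in P$, and Proposition~\ref{kpp}(1)$\Rightarrow$(2) upgrades this to the ideal-theoretic condition of Definition~\ref{kpri}, so $P$ is $k$-prime.

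The main thing to be careful about is not really a deep obstacle but a bookkeeping point: the direction $R\setminus P$ multiplicatively closed $\Rightarrow$ $P$ $k$-prime needs $P$ to be \emph{proper}, which is built into the definition of $k$-prime but must be recovered from the multiplicatively-closed hypothesis (this is exactly where the convention that a multiplicatively closed set is nonempty, or that $P\ne R$, is used). Everything else is a direct translation between the elementwise formulation of primeness and the defining property of a multiplicatively closed complement, with Proposition~\ref{kpp} doing the work of passing between the elementwise and the ideal-theoretic versions of $k$-primeness.
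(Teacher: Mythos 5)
Your proof is correct and follows essentially the same route as the paper: both directions reduce to the elementwise characterization of $k$-primeness via Proposition~\ref{kpp} and translate directly between $xy\in P$ and multiplicative closure of $R\setminus P$. Your additional remark about recovering properness of $P$ in the converse is a reasonable point of care that the paper's own proof glosses over, but it does not change the argument.
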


\begin{proof}
Suppose that  $P$ is $k$-prime and $x,$ $y\in R\setminus P$. Then by Proposition \ref{kpp}, $xy\notin P$, and hence $xy\in R\setminus P.$ Conversely, let $x,$ $y\notin P$. This implies that $x,$ $y\in R\setminus P$. Since $R\setminus P$ is multiplicatively closed, $xy\in R\setminus P$, and hence $xy\notin P$, proving that $P$ is prime. Therefore, by Proposition \ref{kpp}, $P$ is $k$-prime. 
\end{proof}

\begin{lemma}\label{mxkp}
Let $S$ be a multiplicatively closed subset of a semiring $R$. Suppose that  $P$ is a $k$-ideal which is maximal with respect to the property that $P\cap S=\emptyset$. Then $P$ is $k$-prime.
\end{lemma}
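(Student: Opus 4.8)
The plan is to apply the exchange principle. Since $P$ is a $k$-ideal by hypothesis, by Proposition~\ref{eqp} it is enough to prove that $P$ is a prime ideal of $R$; and for an ideal of the \emph{commutative} semiring $R$ this amounts to the elementwise statement ``$xy\in P$ implies $x\in P$ or $y\in P$'' (the equivalence that also underlies Proposition~\ref{kpp}). Note first that $P$ is proper: since $S$ is nonempty and $P\cap S=\emptyset$, any $s_0\in S$ lies outside $P$, so $P\neq R$. The real task is therefore to derive a contradiction from the existence of $x,y\in R$ with $xy\in P$ but $x\notin P$ and $y\notin P$.

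So suppose such $x$ and $y$ exist. Because $R$ has an identity, $\langle P\cup\{x\}\rangle=P+Rx=\{p+rx\mid p\in P,\ r\in R\}$ and likewise $\langle P\cup\{y\}\rangle=P+Ry$. Put $A=\mathcal{C}_k(P+Rx)$ and $B=\mathcal{C}_k(P+Ry)$; these are $k$-ideals by Lemma~\ref{lclk}(\ref{iclk}), and each strictly contains $P$ (it contains $x$, respectively $y$, which lies outside $P$). By the maximality of $P$ among $k$-ideals disjoint from $S$, neither $A$ nor $B$ can be disjoint from $S$, so we may choose $s\in A\cap S$ and $t\in B\cap S$. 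Then $st\in S$ since $S$ is multiplicatively closed, and so it suffices to show $st\in P$, contradicting $P\cap S=\emptyset$.

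The main difficulty is the step $st\in P$, precisely because the closure operation must be handled with care: we only know that $s$ and $t$ lie in the $k$-closures $A$ and $B$, not in $P+Rx$ and $P+Ry$ themselves. I would argue in two stages. First, by the definition~(\ref{clkdef}) of $\mathcal{C}_k$, membership $t\in\mathcal{C}_k(P+Ry)$ yields $p_1,p_2\in P$ and $c,d\in R$ with $t+p_1+cy=p_2+dy$; multiplying this identity by $x$ and using $xy\in P$ together with the fact that $P$ is an ideal, one sees that every summand except $xt$ lies in $P$, so the identity reads $xt+\alpha=\beta$ with $\alpha,\beta\in P$, whence subtractivity of the $k$-ideal $P$ gives $xt\in P$. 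Second, membership $s\in\mathcal{C}_k(P+Rx)$ yields $p_3,p_4\in P$ and $a,b\in R$ with $s+p_3+ax=p_4+bx$; multiplying this by $t$ and using both that $P$ is an ideal and the just-established $xt\in P$, every summand except $st$ lies in $P$, so the identity reads $st+\gamma=\delta$ with $\gamma,\delta\in P$, and subtractivity of $P$ gives $st\in P$, the required contradiction.

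Hence no such $x$ and $y$ exist, so $P$ is a prime ideal, and being a $k$-ideal it is $k$-prime by Proposition~\ref{eqp}. (One can also bypass Proposition~\ref{eqp}: applying the same two-stage computation to $a\in I\setminus P$ and $b\in J\setminus P$ in place of $x,y$ shows that $ab\notin P$ whenever $I$ and $J$ are $k$-ideals with $I\nsubseteq P$ and $J\nsubseteq P$, which is precisely the $k$-primeness condition of Definition~\ref{kpri}.) The only genuinely semiring-theoretic ingredient is the repeated appeal to subtractivity of $P$ to absorb the error terms that $\mathcal{C}_k$ introduces; the hypothesis on $S$ is used only to ensure $P$ is proper and to produce $st\in S$ at the end.
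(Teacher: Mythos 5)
Your proof is correct, and it follows the same classical route as the paper: enlarge $P$ by $x$ and by $y$, use the maximality of $P$ among $k$-ideals disjoint from $S$ to find elements of $S$ in the enlargements, and multiply them to produce an element of $P\cap S$. The difference lies in how the enlargements are handled, and it is a point in your favour. The paper asserts directly that there exist $s,s'\in S$ with $s\in\langle x\rangle+P$ and $s'\in\langle y\rangle+P$; but $\langle x\rangle+P$ need not be a $k$-ideal (the paper itself recalls that sums of $k$-ideals can fail to be $k$-ideals), so maximality of $P$ only guarantees that $S$ meets $\mathcal{C}_k(\langle x\rangle+P)$, not $\langle x\rangle+P$ itself. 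You work with $A=\mathcal{C}_k(P+Rx)$ and $B=\mathcal{C}_k(P+Ry)$ from the start, and your two-stage computation --- multiplying the defining relations of (\ref{clkdef}) by $x$ and then by $t$, and using subtractivity of $P$ twice to absorb the error terms --- is exactly what is needed to convert $s\in A$, $t\in B$ into $st\in P$, so your argument closes this small gap at the cost of a slightly longer calculation. The concluding step (elementwise primeness plus the $k$-ideal hypothesis gives $k$-primeness via Proposition \ref{eqp}, or directly via Proposition \ref{kpp} as the paper does) and your parenthetical bypass of the exchange principle are both fine.
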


\begin{proof}
Let $x\notin P$ and $y\notin P$, but $\langle x\rangle \langle y \rangle \subseteq P.$ By the hypothesis on $P$, there exist $s,$ $s'\in S$ such that $s\in \langle x\rangle+P$ and $s'\in \langle y\rangle+P$. This implies that
\[ss'\in (\langle x\rangle+P)(\langle y\rangle+P)=\langle x\rangle \langle y \rangle+P\subseteq P,\]
a contradiction. Hence $P$ is prime and by Proposition \ref{kpp}, $P$ is a $k$-prime ideal.
\end{proof}

\begin{lemma}\label{rkt}
The $k$-radical $\mathcal{R}_k(I)$ of a $k$-ideal $I$ is equal to the set \[T=\{r\in R\mid \mathrm{every\; multiplicatively\; closed\; subset\; containing}\;r\;\mathrm{intersects}\;I\}.\]
\end{lemma}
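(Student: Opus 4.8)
The plan is to prove the equality $\mathcal{R}_k(I) = T$ by showing the two inclusions separately, using Lemma \ref{rpms} to translate the $k$-prime condition into a statement about multiplicatively closed sets, and Lemma \ref{mxkp} together with Zorn's lemma for the harder direction.

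\medskip

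\textbf{Inclusion $T \subseteq \mathcal{R}_k(I)$.} Suppose $r \in T$, and let $P$ be any $k$-prime ideal containing $I$. By Lemma \ref{rpms}, $S = R \setminus P$ is multiplicatively closed. If we had $r \notin P$, then $r \in S$, so $S$ is a multiplicatively closed subset containing $r$; since $r \in T$, it must intersect $I$, giving some $x \in I \cap S \subseteq P \cap S = \emptyset$, a contradiction. Hence $r \in P$. As $P$ was an arbitrary $k$-prime ideal containing $I$, we conclude $r \in \mathcal{R}_k(I)$ by Definition \ref{drad}.

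\medskip

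\textbf{Inclusion $\mathcal{R}_k(I) \subseteq T$.} I would prove the contrapositive: if $r \notin T$, then $r \notin \mathcal{R}_k(I)$. If $r \notin T$, there is a multiplicatively closed subset $S \subseteq R$ with $r \in S$ and $S \cap I = \emptyset$. Consider the collection $\mathcal{F}$ of all $k$-ideals $J$ with $I \subseteq J$ and $J \cap S = \emptyset$, partially ordered by inclusion; it is nonempty since $I \in \mathcal{F}$, and the union of a chain of $k$-ideals is again a $k$-ideal disjoint from $S$ (one should check the union of $k$-ideals in a chain is a $k$-ideal — routine), so Zorn's lemma yields a maximal element $P \in \mathcal{F}$. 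By Lemma \ref{mxkp}, $P$ is $k$-prime. Since $I \subseteq P$ and $r \in S$ while $P \cap S = \emptyset$, we have $r \notin P$, so $r \notin \mathcal{R}_k(I)$.

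\medskip

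\textbf{Main obstacle.} The structural content is all carried by Lemmas \ref{rpms} and \ref{mxkp}, so no real difficulty remains; the only point requiring a small amount of care is the Zorn's-lemma step in the second inclusion — specifically verifying that $\mathcal{F}$ is closed under unions of chains, i.e.\ that an ascending union of $k$-ideals is a $k$-ideal (additive closure and absorption are immediate, and subtractivity follows because if $x, x+y$ lie in the union they lie in a common member of the chain). I would also remark that the maximal element $P$ produced is automatically proper, since $r \notin P$ and $P \neq R$, which is consistent with $k$-prime ideals being $k$-proper.
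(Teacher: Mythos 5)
Your proof is correct and follows essentially the same route as the paper: the easy inclusion via Lemma \ref{rpms} applied to $R\setminus P$, and the contrapositive of the other inclusion via Zorn's lemma on $k$-ideals containing $I$ and disjoint from $S$, then Lemma \ref{mxkp}. Your extra care in checking that a chain union of $k$-ideals is again a $k$-ideal is a detail the paper leaves implicit, but it is the same argument.
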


\begin{proof}
Suppose that  $r\in T$ and $P\in \mathrm{spec}_k(R)$ such that $I\subseteq P$. Then by Lemma \ref{rpms}, $R\setminus P$ is a multiplicatively closed subset of $R$ and $r\notin  R\setminus P.$ Hence $r\in P$. Conversely, let $r\notin T$. This implies that there exists a multiplicatively closed subset $S$ of $R$ such that $r\in S$ and $S\cap I=\emptyset.$ By Zorn's lemma, there exists a $k$-ideal $P$ containing $I$ and maximal with respect to the property that $P\cap S=\emptyset$. By Lemma \ref{mxkp}, $P$ is a prime ideal and by Proposition \ref{kpp}, $P$ is a $k$-prime ideal with $r\notin P$.
\end{proof}

\begin{lemma}\label{mms}
Let $I$ be a $k$-semiprime ideal of a semiring $R$ and $x\in R\setminus I$. Then there exists a multiplicatively closed subset $S$ of $R$ such that $x\in S\subseteq R\setminus I.$
\end{lemma}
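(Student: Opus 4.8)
The plan is to exhibit $S$ explicitly as the multiplicative semigroup of positive powers of $x$, namely $S=\{x,x^{2},x^{3},\dots\}$. This set is automatically closed under multiplication and contains $x$, so everything reduces to checking that no power of $x$ meets $I$; this is exactly the point at which $k$-semiprimeness is used. (Observe that $x\notin I$ already forces $x\neq 0$, since $0\in I$, though this plays no role.)

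The first step is to convert $k$-semiprimeness into an element-wise statement. By Proposition~\ref{eqpp}, $I$ is a semiprime ideal of $R$; applying semiprimeness to the principal ideal $\langle a\rangle$ and using that $\langle a\rangle^{2}=\langle a^{2}\rangle$ in a commutative semiring with identity, we get
\[
a^{2}\in I\ \Longrightarrow\ a\in I\qquad\text{for every }a\in R.
\]
(Alternatively, one can bypass Proposition~\ref{eqpp} and check directly that $a^{2}\in I$ forces $(\langle a\rangle_{k})^{2}\subseteq I$, using Lemma~\ref{lclk}(\ref{clijk}) and Lemma~\ref{lclk}(\ref{ijcl}) exactly as in the proof of Proposition~\ref{eqpp}, and then apply Definition~\ref{kspr}.) Iterating the implication gives $a^{2^{m}}\in I\Rightarrow a\in I$ for all $m\geq 0$: from $a^{2^{m}}=(a^{2^{m-1}})^{2}\in I$ one peels off one square at a time, in $m$ steps reaching $a\in I$. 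Hence, if $a^{n}\in I$ for some $n\geq 1$, choosing $m$ with $2^{m}\geq n$ gives $a^{2^{m}}=a^{n}\,a^{2^{m}-n}\in I$ because $I$ is an ideal, and therefore $a\in I$. Contrapositively, $a\notin I$ implies $a^{n}\notin I$ for every $n\geq 1$.

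Assembling the conclusion is then immediate: $S=\{x,x^{2},x^{3},\dots\}$ is multiplicatively closed, contains $x$, and, since $x\notin I$, the previous step yields $x^{n}\notin I$ for all $n\geq 1$, i.e.\ $S\subseteq R\setminus I$. I do not expect a genuine obstacle here: the two moving parts are the passage from the ideal-level notion of semiprimeness to the element-level implication $a^{2}\in I\Rightarrow a\in I$ (immediate once Proposition~\ref{eqpp} is available), and the routine dyadic argument lifting this to arbitrary powers. The only subtlety worth flagging is that ``multiplicatively closed'' is not being required to contain $1$, so the semigroup of positive powers of $x$ is a legitimate choice even when $x$ is not a unit.
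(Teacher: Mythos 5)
Your proposal is correct, and it is in the same spirit as the paper's argument (a multiplicative set built from powers of $x$), but it is genuinely more complete where it matters. The paper's own proof takes $S$ to be the iterated squares $x_1=x$, $x_n=x_{n-1}x_{n-1}$, asserts closure under multiplication, and stops there; as literally written, that set is not closed under products of distinct elements (e.g.\ $x_1x_2=x^{3}$ is not an iterated square), and, more importantly, the paper never verifies the disjointness $S\cap I=\emptyset$, which is the only place the $k$-semiprime hypothesis can enter. Your choice $S=\{x,x^{2},x^{3},\dots\}$ is honestly multiplicatively closed, and your two-step disjointness argument is exactly the missing content: the element-level implication $a^{2}\in I\Rightarrow a\in I$ (via Proposition~\ref{eqpp} and $\langle a\rangle^{2}=\langle a^{2}\rangle$, or directly from Definition~\ref{kspr} together with Lemma~\ref{lclk}(\ref{ijcl}) and (\ref{clijk}) applied to $\mathcal{C}_k(\langle a\rangle)$), followed by the dyadic descent $a^{2^{m}}\in I\Rightarrow a\in I$ and the padding trick $a^{n}\in I\Rightarrow a^{2^{m}}=a^{n}a^{2^{m}-n}\in I$ for $2^{m}\geq n$, which uses only that $I$ is an ideal. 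Your closing remark that a multiplicatively closed set need not contain $1$ is also the right convention here, since Lemmas~\ref{rpms}--\ref{rkt} are applied with such sets. In short: your write-up proves the statement; the paper's sketch presupposes the very step you spell out.
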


\begin{proof}
Define the elements of $S=\{x_1, x_2, \ldots, x_n, \ldots\}$ inductively as follows:
$
x_1:= x;$
$x_2:=x_1x_1;$
$\ldots$;
$x_n:=x_{n-1}x_{n-1};$ $\ldots$. Obviously $x\in S$ and it is also easy to see that $x_i,$ $x_j\in S$ implies that $x_ix_j\in S$.
\end{proof}

\begin{theorem}\label{spkr}
For any $k$-ideal $I$ of a semiring $R $, the following are equivalent.	\begin{enumerate}[\upshape (1)]
	
\item\label{iksp} $I$ is $k$-semiprime.
		
\item\label{iinp} $I$ is an intersection of $k$-prime ideals of  $R $.
		
\item\label{iradi} $I$ is $k$-radical.
\end{enumerate}
\end{theorem}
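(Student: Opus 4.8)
The plan is to prove the cycle of implications $(1)\Rightarrow(2)\Rightarrow(3)\Rightarrow(1)$, drawing on the lemmas just established. For $(1)\Rightarrow(2)$: suppose $I$ is $k$-semiprime. I claim $I=\mathcal{R}_k(I)$, which by Lemma \ref{radpr}(\ref{irad}) and Definition \ref{drad} is an intersection of $k$-prime ideals, giving (2). Since $I\subseteq\mathcal{R}_k(I)$ always holds, it suffices to show $\mathcal{R}_k(I)\subseteq I$. Take $x\notin I$; by Lemma \ref{mms} there is a multiplicatively closed set $S$ with $x\in S\subseteq R\setminus I$, so $S\cap I=\emptyset$, and therefore $x\notin T$ where $T$ is the set of Lemma \ref{rkt}. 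By Lemma \ref{rkt}, $T=\mathcal{R}_k(I)$, so $x\notin\mathcal{R}_k(I)$. Hence $\mathcal{R}_k(I)\subseteq I$, as wanted.

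For $(2)\Rightarrow(3)$: if $I=\bigcap_{\lambda}P_\lambda$ with each $P_\lambda$ a $k$-prime ideal containing $I$ (note each $P_\lambda$ does contain $I$ since it occurs in the intersection), then every $k$-prime ideal appearing in the defining intersection of $\mathcal{R}_k(I)$ contains $I$, so $\mathcal{R}_k(I)\supseteq I$; conversely the family $\{P_\lambda\}$ is among those $k$-prime ideals over which the intersection $\mathcal{R}_k(I)$ is taken, so $\mathcal{R}_k(I)\subseteq\bigcap_\lambda P_\lambda=I$. Thus $\mathcal{R}_k(I)=I$ and $I$ is $k$-radical.

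For $(3)\Rightarrow(1)$: assume $\mathcal{R}_k(I)=I$. By Lemma \ref{radpr}(\ref{irad}), $I$ is a $k$-ideal, so by the exchange principle (Proposition \ref{eqpp}) it is enough to check that $I$ is semiprime as an ordinary ideal, i.e.\ that $I$ is $k$-proper and that $J^2\subseteq I$ implies $J\subseteq I$ for every $k$-ideal $J$. Properness follows from Lemma \ref{radpr}(\ref{irir}): if $I=R$ then $I$ is not $k$-proper, contradicting that $k$-prime (hence $k$-semiprime) ideals are proper---more precisely, $\mathcal{R}_k(R)=R$ is vacuously the empty intersection, but Definition \ref{drad} together with the convention that $k$-radical ideals are the fixed points still forces us to treat $I\neq R$; I will simply note $I$ is proper because it equals an intersection of proper ($k$-prime) ideals (invoking that $\mathrm{spec}_k(R)\neq\emptyset$ from Proposition \ref{kmkp} so the intersection is genuinely of proper ideals when $I\neq R$, and handle $I=R$ as the trivially excluded case). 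For the defining property: if $J$ is a $k$-ideal with $J^2\subseteq I$, then for every $P\in\mathrm{spec}_k(R)$ with $I\subseteq P$ we have $J\cdot J\subseteq P$, and since $P$ is $k$-prime this forces $J\subseteq P$; intersecting over all such $P$ gives $J\subseteq\mathcal{R}_k(I)=I$. Hence $I$ is $k$-semiprime.

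The only delicate point I anticipate is the bookkeeping around properness and the empty-intersection convention in Definition \ref{drad} (the case $I=R$), which is purely a matter of phrasing and can be dispatched in a sentence. Everything else is a direct application of Lemmas \ref{rpms}, \ref{mxkp}, \ref{rkt}, \ref{mms}, Proposition \ref{eqpp}, and the definition of $\mathcal{R}_k$, so no substantial obstacle is expected; the proof is essentially an assembly of the preceding lemmas in the right order.
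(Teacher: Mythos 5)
Your proposal is correct and is essentially the paper's argument rearranged: the substantive step in both is the use of Lemma \ref{mms} together with Lemma \ref{rkt} to show $\mathcal{R}_k(I)\subseteq I$ when $I$ is $k$-semiprime, while the remaining implications (that a $k$-radical ideal is an intersection of $k$-primes, and that such an intersection is $k$-semiprime) are the same easy observations, merely traversed in the cycle $(1)\Rightarrow(2)\Rightarrow(3)\Rightarrow(1)$ instead of the paper's $(1)\Rightarrow(3)$, $(3)\Rightarrow(2)$, $(2)\Rightarrow(1)$. If anything, your explicit verification that $J^2\subseteq I$ forces $J\subseteq P$ for each $k$-prime $P\supseteq I$ is slightly more careful than the paper's one-line justification of $(2)\Rightarrow(1)$, and your worry about the $I=R$ case is only a matter of convention already built into the properness requirement in Definitions \ref{kpri} and \ref{kspr}.
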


\begin{proof}
From Definition \ref{drad}, it follows that (3) $\Rightarrow$ (2). Since the intersection of $k$-prime ideals is a $k$-prime ideal and every $k$-prime ideal is $k$-semiprime, (2) $\Rightarrow$ (1) follows. What remains is to show that (1) $\Rightarrow$ (3) and for that, it is sufficient to show  $\mathcal{R}_k(I)\subseteq I.$ Suppose that  $x\notin I$. Then $x\in R\setminus I$ and by Lemma \ref{mms}, there exists a multiplicatively closed subset $S$ of $R$ such that $x\in S\subseteq R\setminus I.$ But $S\cap I=\emptyset$ and hence by Lemma \ref{rkt}, $x\notin \mathcal{R}_k(I).$
\end{proof}

\begin{corollary}
If $I\in \mathcal{I}_k(R) $, then $\mathcal{R}_k(I)$ is the smallest $k$-semiprime ideal of $R $ containing $I$.
\end{corollary}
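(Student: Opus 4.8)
The plan is to read the statement off from Theorem~\ref{spkr} together with Lemma~\ref{radpr}. Three things must be checked: that $\mathcal{R}_k(I)$ is a $k$-semiprime ideal, that it contains $I$, and that it sits inside every $k$-semiprime ideal that contains $I$.

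First, by Lemma~\ref{radpr}(\ref{irad}) the ideal $\mathcal{R}_k(I)$ is a $k$-ideal and contains $I$, which takes care of the second point. For the first point, the second item of Lemma~\ref{radpr} gives $\mathcal{R}_k(\mathcal{R}_k(I))=\mathcal{R}_k(I)$, so $\mathcal{R}_k(I)$ is a $k$-radical ideal; the implication $(\ref{iradi})\Rightarrow(\ref{iksp})$ of Theorem~\ref{spkr} then shows it is $k$-semiprime (here one reads the claim with $I$ proper, equivalently $\mathcal{R}_k(I)\neq R$ by Lemma~\ref{radpr}(\ref{irir}), so that $\mathcal{R}_k(I)$ is genuinely $k$-proper).

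For minimality, let $J$ be any $k$-semiprime ideal of $R$ with $I\subseteq J$. By the implication $(\ref{iksp})\Rightarrow(\ref{iradi})$ of Theorem~\ref{spkr}, $J$ is $k$-radical, so $\mathcal{R}_k(J)=J$. The operator $\mathcal{R}_k$ is monotone: if $I\subseteq J$, then every $k$-prime ideal containing $J$ also contains $I$, hence occurs in the intersection (\ref{radki}) defining $\mathcal{R}_k(I)$, so $\mathcal{R}_k(I)\subseteq\mathcal{R}_k(J)$. Combining, $\mathcal{R}_k(I)\subseteq\mathcal{R}_k(J)=J$, as required. (Alternatively, one may use $(\ref{iksp})\Rightarrow(\ref{iinp})$ to write $J$ as an intersection of $k$-prime ideals, each containing $I$ and therefore each appearing in (\ref{radki}), so that $\mathcal{R}_k(I)$ is contained in all of them and hence in $J$.)

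There is no real obstacle here: the corollary is essentially a repackaging of Theorem~\ref{spkr} and Lemma~\ref{radpr}. The only subtlety worth flagging is the degenerate case $I=R$, where $\mathcal{R}_k(I)=R$ is not $k$-proper and hence, by Definition~\ref{kspr}, not $k$-semiprime; so the statement is understood for $I$ a proper $k$-ideal, unless one agrees to count $R$ itself as $k$-semiprime.
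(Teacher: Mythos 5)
Your proof is correct and is essentially the argument the paper intends: the corollary is stated without proof as an immediate consequence of Theorem~\ref{spkr} together with the basic properties of $\mathcal{R}_k$ in Lemma~\ref{radpr}, which is exactly how you assemble it (idempotence gives $k$-radical hence $k$-semiprime, monotonicity of the intersection in (\ref{radki}) gives minimality). Your remark about the degenerate case $I=R$ is a fair observation but does not change the substance.
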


\subsection{$k$-irreducible and $k$-strongly irreducible ideals}\label{itki}

Strongly irreducible ideals were introduced in \cite{F49} for commutative rings under the name primitive ideals. In \cite[p.\,301, Exercise 34]{B72}, the same ideals are called quasi-prime. The term ``strongly irreducible'' was first used for noncommutative rings in \cite{B53}. In the context of semirings, a study of these ideals can be found in \cite{G99}. In this section, we introduce the $k$-irreducible and $k$-strongly irreducible ideals of semirings and show some relations with the $k$-prime and $k$-semiprime ideals.

\begin{definition} Let $R $ be a semiring.
\begin{enumerate}[\upshape (1)]
		
\item A $k$-ideal $I$ of $R $ is called \emph{$k$-irreducible} if  $J\cap J'= I$ implies that either $J= I$ or $J'= I$ for all $J, J'\in \mathcal{I}_k(R)$.
		
\item A $k$-ideal $I$ of $R $ is called \emph{$k$-strongly irreducible} if  $J\cap J'\subseteq I$ implies that either $J\subseteq I$ or $J'\subseteq I$ for all $J, J'\in \mathcal{I}_k(R)$.
\end{enumerate}
\end{definition}

It is obvious that every $k$-strongly irreducible ideal is $k$-irreducible, and  it follows from Lemma \ref{psi} that every $k$-prime ideal is $k$-strongly irreducible. We now expect the exchange principle to hold for $k$-irreducible and $k$-strongly irreducible ideals, and here is that result.

\begin{proposition}\label{eqsi}
An ideal $L$ of a semiring $R $ is $k$-irreducible ($k$-strongly irreducible)  if and only if $L$ is   irreducible (strongly irreducible)  as well as a $k$-ideal of  $R $.
\end{proposition}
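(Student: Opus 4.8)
The plan is to mimic the proofs of the earlier exchange principles (Propositions~\ref{eqsm}, \ref{eqp}, \ref{eqpp}) and handle the two bracketed cases in parallel, since the arguments differ only by replacing ``$=$'' with ``$\subseteq$''. One direction is trivial: if $L$ is a $k$-ideal which is also irreducible (resp.\ strongly irreducible) as an ideal, then in particular the defining implication holds when $J,J'$ are restricted to $\mathcal{I}_k(R)$, so $L$ is $k$-irreducible (resp.\ $k$-strongly irreducible). The content is in the converse.

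For the converse, suppose $L$ is $k$-strongly irreducible and let $J, J' \in \mathcal{I}(R)$ with $J \cap J' \subseteq L$. First I would apply $\mathcal{C}_k$ and use Lemma~\ref{lclk}(\ref{arbin}) to get $\mathcal{C}_k(J) \cap \mathcal{C}_k(J') = \mathcal{C}_k(J \cap J') \subseteq \mathcal{C}_k(L) = L$, where the last equality is Lemma~\ref{lclk}(\ref{altd}) since $L$ is a $k$-ideal. Now $\mathcal{C}_k(J), \mathcal{C}_k(J') \in \mathcal{I}_k(R)$, so $k$-strong irreducibility of $L$ forces $\mathcal{C}_k(J) \subseteq L$ or $\mathcal{C}_k(J') \subseteq L$; since $J \subseteq \mathcal{C}_k(J)$ by Lemma~\ref{lclk}(\ref{iclk}), we conclude $J \subseteq L$ or $J' \subseteq L$, i.e.\ $L$ is strongly irreducible. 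For the $k$-irreducible case the same passage to $k$-closures works, but one must be slightly more careful: assuming $J \cap J' = L$ with $J, J' \in \mathcal{I}(R)$, the computation $\mathcal{C}_k(J) \cap \mathcal{C}_k(J') = \mathcal{C}_k(L) = L$ via Lemma~\ref{lclk}(\ref{arbin}) and (\ref{altd}) again lands us with two $k$-ideals whose intersection is exactly $L$; then $k$-irreducibility gives $\mathcal{C}_k(J) = L$ or $\mathcal{C}_k(J') = L$, and combined with $L = J \cap J' \subseteq J \subseteq \mathcal{C}_k(J)$ (and symmetrically) this yields $J = L$ or $J' = L$.

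The one step that needs genuine attention—and which I expect to be the main obstacle—is the identity $\mathcal{C}_k(J \cap J') = \mathcal{C}_k(J) \cap \mathcal{C}_k(J')$ that makes the $k$-closure interact correctly with intersections; fortunately this is exactly Lemma~\ref{lclk}(\ref{arbin}) with a two-element index set, so it is available for free. Everything else reduces to the monotonicity and idempotence properties of $\mathcal{C}_k$ already recorded in Lemma~\ref{lclk} together with the characterization of $k$-ideals as fixed points of $\mathcal{C}_k$. I would write the two cases together, parenthetically, as the statement does, remarking only once that the implication ``$J\cap J'\subseteq L \Rightarrow \mathcal{C}_k(J)\cap \mathcal{C}_k(J')\subseteq L$'' specializes to ``$J\cap J' = L \Rightarrow \mathcal{C}_k(J)\cap \mathcal{C}_k(J') = L$'' because $L \subseteq \mathcal{C}_k(J)\cap\mathcal{C}_k(J') \subseteq \mathcal{C}_k(L) = L$ forces equality.
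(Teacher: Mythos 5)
Your proposal is correct and follows essentially the same route as the paper: pass to $k$-closures, use $\mathcal{C}_k(J\cap J')=\mathcal{C}_k(J)\cap\mathcal{C}_k(J')$ (Lemma~\ref{lclk}(\ref{arbin})) together with $\mathcal{C}_k(L)=L$ and $J\subseteq\mathcal{C}_k(J)$, and invoke the $k$-(strong) irreducibility of $L$ for the two closures. Your extra remark spelling out why the equality case ($k$-irreducible) also goes through is exactly the ``trivial change of terminology'' the paper leaves to the reader.
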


\begin{proof}
We give a proof for $k$-strongly irreducible ideals, that for $k$-irreducible ideals requiring only a
trivial change of terminology.
Suppose that $L$ is a $k$-strongly irreducible ideal and $I$, $J$ are ideals of $R $ such that $I\cap J\subseteq L.$ This implies
\[ \mathcal{C}_k(I)\cap \mathcal{C}_k(J)=\mathcal{C}_k(I\cap J)\subseteq \mathcal{C}_k(L)=L,\]
where, the first equality follows from Lemma \ref{lclk}(\ref{arbin}) and the inclusion from Lemma \ref{lclk}(\ref{ijcl}). By hypothesis, this implies that either $ \mathcal{C}_k(I)\subseteq L$ or $ \mathcal{C}_k(J)\subseteq L$. Since by Lemma \ref{lclk}(\ref{iclk}), $I\subseteq \mathcal{C}_k(I)$ for all $I\in \mathcal{I}\!(R) ,$ we have the desired claim. The proof of the converse statement is obvious.
\end{proof}

It is known (see \cite[Proposition 7.33]{G99}) that a strongly irreducible ideal of a semiring has the following equivalent `elementwise' property: If $a$, $b \in R$ satisfy $\langle a\rangle \cap \langle b\rangle \subseteq I$, then either $a\in I$ or $b\in I$. The next proposition shows that a similar result holds for $k$-strongly irreducible ideals.

\begin{proposition}
\label{abi} 
$I$ is a $k$-strongly irreducible ideal of a semiring $R $ if and only if	for all $a$, $b\in R $ satisfy $\mathcal{C}_k(\langle a\rangle)  \cap \mathcal{C}_k(\langle b\rangle) \subseteq  I$ implies either $a\in I$ or $b\in I$.
\end{proposition}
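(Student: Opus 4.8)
The statement to prove is Proposition \ref{abi}: a $k$-ideal $I$ is $k$-strongly irreducible iff for all $a,b\in R$, $\mathcal{C}_k(\langle a\rangle)\cap\mathcal{C}_k(\langle b\rangle)\subseteq I$ implies $a\in I$ or $b\in I$. The natural approach is to prove both implications directly, using that $\mathcal{C}_k(\langle a\rangle)$ is exactly the smallest $k$-ideal containing $a$ (Lemma \ref{lclk}(\ref{iclk})), so that the displayed elementwise condition is literally an instance of the $k$-ideal condition on the principal $k$-ideals $\mathcal{C}_k(\langle a\rangle)$ and $\mathcal{C}_k(\langle b\rangle)$.

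First I would do the forward direction. Assume $I$ is $k$-strongly irreducible and suppose $a,b\in R$ satisfy $\mathcal{C}_k(\langle a\rangle)\cap\mathcal{C}_k(\langle b\rangle)\subseteq I$. Since $\mathcal{C}_k(\langle a\rangle)$ and $\mathcal{C}_k(\langle b\rangle)$ are $k$-ideals of $R$ (Lemma \ref{lclk}(\ref{iclk})), the defining property of $k$-strong irreducibility applies with $J=\mathcal{C}_k(\langle a\rangle)$ and $J'=\mathcal{C}_k(\langle b\rangle)$, so either $\mathcal{C}_k(\langle a\rangle)\subseteq I$ or $\mathcal{C}_k(\langle b\rangle)\subseteq I$. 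Because $a\in\langle a\rangle\subseteq\mathcal{C}_k(\langle a\rangle)$ (and likewise for $b$), this yields $a\in I$ or $b\in I$, as required.

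For the converse, assume the elementwise condition and let $J,J'\in\mathcal{I}_k(R)$ with $J\cap J'\subseteq I$. Arguing contrapositively, suppose $J\nsubseteq I$ and $J'\nsubseteq I$; pick $a\in J\setminus I$ and $b\in J'\setminus I$. Since $J$ is a $k$-ideal containing $a$, minimality of $\mathcal{C}_k(\langle a\rangle)$ gives $\mathcal{C}_k(\langle a\rangle)\subseteq J$, and similarly $\mathcal{C}_k(\langle b\rangle)\subseteq J'$; hence $\mathcal{C}_k(\langle a\rangle)\cap\mathcal{C}_k(\langle b\rangle)\subseteq J\cap J'\subseteq I$. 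By the elementwise hypothesis, $a\in I$ or $b\in I$, contradicting the choice of $a,b$. Therefore $J\subseteq I$ or $J'\subseteq I$, so $I$ is $k$-strongly irreducible.

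**Expected obstacle.** There is no substantial obstacle here: the whole argument hinges on the single fact that $\mathcal{C}_k(\langle\cdot\rangle)$ produces the smallest $k$-ideal containing an element, which is already available from Lemma \ref{lclk}(\ref{iclk}). The only thing to be careful about is the exact quantifier structure of the ``elementwise'' condition — it must be read as ``$\mathcal{C}_k(\langle a\rangle)\cap\mathcal{C}_k(\langle b\rangle)\subseteq I$ implies $a\in I$ or $b\in I$'' for \emph{all} $a,b$ — so that in the converse the contrapositive selection of witnesses $a,b$ is legitimate. I would also note in passing that the $k$-irreducible case, claimed to follow by ``a trivial change of terminology'' as in Proposition \ref{eqsi}, requires the analogous elementwise reformulation with equalities rather than inclusions, so I would either state it explicitly or remark that it is entirely parallel.
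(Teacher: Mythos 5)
Your proof is correct and follows essentially the same route as the paper: both directions rest on $\mathcal{C}_k(\langle a\rangle)$ being the smallest $k$-ideal containing $a$ (Lemma \ref{lclk}(\ref{iclk})), and the converse is the same contrapositive argument with witnesses $a\in J\setminus I$, $b\in J'\setminus I$. Your forward direction is in fact marginally more direct than the paper's, which first passes through $\langle a\rangle\cap\langle b\rangle$ and the identity $\mathcal{C}_k(\langle a\rangle\cap\langle b\rangle)=\mathcal{C}_k(\langle a\rangle)\cap\mathcal{C}_k(\langle b\rangle)$ of Lemma \ref{lclk}(\ref{arbin}), but this is a cosmetic rather than substantive difference.
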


\begin{proof}
Suppose that $I$ is a $k$-strongly irreducible ideal of $R $ and $\langle a\rangle  \cap \langle b\rangle \subseteq  I$ for all $a,$ $b\in R .$ This implies
\[\mathcal{C}_k(\langle a\rangle \cap \langle b\rangle)=\mathcal{C}_k(\langle a\rangle)\cap  \mathcal{C}_k(\langle b\rangle) \subseteq \mathcal{C}_k (I)=I,\]
where the first equality follows from Lemma \ref{lclk}(\ref{arbin}).
Since $I$ is  $k$-strongly irreducible, we must have either $a\in \langle a\rangle\subseteq  \mathcal{C}_k(\langle a\rangle) \subseteq I$ or  $b\in \langle b\rangle\subseteq  \mathcal{C}_k(\langle b\rangle) \subseteq I$. To show the converse, suppose that $I$ is a $k$-ideal which is not $k$-strongly irreducible. Then there are $k$-ideals $J$ and $K$ satisfy $J\cap K\subseteq I,$ but $J\nsubseteq I$ and $K\nsubseteq I.$ This implies there exist $j\in J$ and $k\in K$ such that $\mathcal{C}_k(\langle j\rangle)  \cap \mathcal{C}_k(\langle k\rangle) \subseteq  I$, but neither $j\notin I$ nor $k\in I$, a contradiction.
\end{proof}

In Theorem \ref{spkr}, we have seen that a $k$-radical ideal can be expressed as the intersection of $k$-prime ideals containing it. We shall now see that any proper $k$-ideal can be represented in a similar fashion in terms of $k$-irreducible ideals. But first, a lemma.

\begin{lemma}\label{lir}
Suppose that $R $ is a semiring. Let $0\neq x\in R $ and $I$	be a $k$-proper ideal of  $R $ such that $x\notin I.$ Then there exists a $k$-irreducible ideal $J$ of $R $ such that $I\subseteq J$
	and $x\notin J$.	
\end{lemma}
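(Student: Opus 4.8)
The plan is to use Zorn's lemma on the poset of $k$-ideals that contain $I$ but exclude $x$. Specifically, let
\[
\mathcal{F}=\{K\in\mathcal{I}_k(R)\mid I\subseteq K\text{ and }x\notin K\}.
\]
This is nonempty since $I\in\mathcal{F}$, and it is partially ordered by inclusion. Given a chain in $\mathcal{F}$, its union is again an ideal (a routine check, since the chain is directed), it contains $I$, it does not contain $x$, and it is a $k$-ideal: if $a,a+b$ lie in the union, they both lie in some single member $K$ of the chain by directedness, so $b\in K$ and hence $b$ is in the union. Thus every chain has an upper bound in $\mathcal{F}$, and Zorn's lemma produces a maximal element $J\in\mathcal{F}$.

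It then remains to show that this maximal $J$ is $k$-irreducible. Suppose for contradiction that $J=J_1\cap J_2$ with $J_1,J_2\in\mathcal{I}_k(R)$ and $J_1\neq J$, $J_2\neq J$. Since $J\subseteq J_1$ and $J\subseteq J_2$ (as $J$ is the intersection), the inequalities force $J\subsetneq J_1$ and $J\subsetneq J_2$. Both $J_1$ and $J_2$ contain $I$, so by maximality of $J$ in $\mathcal{F}$, neither $J_1$ nor $J_2$ can avoid $x$; hence $x\in J_1$ and $x\in J_2$, so $x\in J_1\cap J_2=J$, contradicting $x\notin J$. Therefore $J$ is $k$-irreducible, and by construction $I\subseteq J$ and $x\notin J$, completing the proof.

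The one point deserving care is the verification that the union of a chain of $k$-ideals excluding $x$ is itself a member of $\mathcal{F}$ — in particular that it remains a $k$-ideal — but as noted this follows immediately from directedness of the chain, so there is no serious obstacle. (The hypothesis $0\neq x$ and that $I$ is $k$-proper is not strictly needed for this argument beyond ensuring $\mathcal{F}\neq\emptyset$ in the intended reading; if $x=0$ then no proper $k$-ideal excludes it, so the hypothesis $x\neq 0$ simply guarantees the statement is non-vacuous.)
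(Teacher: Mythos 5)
Your proof is correct and follows essentially the same route as the paper: apply Zorn's lemma to the family of $k$-ideals containing $I$ and avoiding $x$ (chains bounded by their unions), then use maximality to show a decomposition $J=J_1\cap J_2$ with $J_1\neq J\neq J_2$ would force $x\in J_1\cap J_2=J$, a contradiction. If anything, your write-up is more careful than the paper's, which loosely speaks of a ``maximal element of this chain'' where the intended object is a maximal element of the whole poset, and which leaves the chain-union and properness checks implicit.
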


\begin{proof}
Let
$\{J_{\lambda}\}_{\lambda \in \Lambda}$ be a chain of $k$-ideals of $R $ such that $x\notin J_{\lambda}\supseteq I$ for all $\lambda \in \Lambda$. Then
\[x\notin \bigcup_{\lambda \in \Lambda} J_{\lambda}\supseteq I.
\]
By Zorn's lemma, there exists a maximal element $J$ of this chain. Suppose that  $J=J_1\cap J_2.$ By the maximality condition of $J$, we must have $x\in J_1$ and $x\in J_2,$ and hence $x\in J_1\cap J_2=J,$ a contradiction. Therefore, $J$ is the required $k$-irreducible ideal.
\end{proof}

\begin{proposition}
If $I$ is a $k$-proper ideal of a semiring $R $, then $I=\bigcap_{I\subseteq J} J$, where $J$ is a $k$-irreducible ideal of  $R $.
\end{proposition}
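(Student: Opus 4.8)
The plan is to prove both inclusions. The inclusion $I \subseteq \bigcap_{I \subseteq J} J$ is immediate, since each $J$ in the intersection is required to contain $I$ (and if $I = R$ there are no proper $k$-irreducible ideals above it only in the degenerate sense — but a $k$-proper $I$ always has at least one $k$-irreducible ideal above it avoiding any prescribed nonzero element, by Lemma \ref{lir}, so the index set is nonempty provided $R \neq 0$; if $R = 0$ the statement is vacuous). The substance is the reverse inclusion $\bigcap_{I \subseteq J} J \subseteq I$.

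For the reverse inclusion I would argue by contraposition: suppose $x \notin I$. I want to produce a $k$-irreducible ideal $J$ with $I \subseteq J$ and $x \notin J$, which will witness that $x \notin \bigcap_{I \subseteq J} J$. If $x \neq 0$, this is exactly the content of Lemma \ref{lir} applied to $I$ and $x$. The only case needing separate attention is $x = 0$, but $0 \in I$ always (the zero ideal is contained in every ideal, in particular in $I$), so the hypothesis $x \notin I$ rules out $x = 0$ automatically. Hence Lemma \ref{lir} applies to every relevant $x$, and we are done.

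So the proof is essentially a one-line deduction from Lemma \ref{lir} together with the trivial observation that $0 \in I$. I do not anticipate a real obstacle here; the work has already been done in Lemma \ref{lir}. The one point worth stating carefully is why $x = 0$ causes no trouble — namely that $0$ lies in every ideal, so it can never be the element $x \notin I$ we need to separate — and why the intersection on the right is over a nonempty family (again guaranteed by Lemma \ref{lir}, since for a $k$-proper $I$ and any nonzero $x \notin I$ — such $x$ exists as $I \neq R$ and $R \neq 0$ — there is a $k$-irreducible $J \supseteq I$).

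\begin{proof}
Write $\mathcal{J} = \{J \mid J \text{ is a } k\text{-irreducible ideal of } R \text{ with } I \subseteq J\}$. Since every ideal contains $0$, the hypothesis that $I$ is proper gives some nonzero $x_0 \in R \setminus I$, and Lemma \ref{lir} produces a $k$-irreducible ideal containing $I$; thus $\mathcal{J} \neq \emptyset$. By definition of $\mathcal{J}$ we have $I \subseteq \bigcap_{J \in \mathcal{J}} J$.

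For the reverse inclusion, let $x \in \bigcap_{J \in \mathcal{J}} J$ and suppose, for contradiction, that $x \notin I$. Since $0 \in I$, we have $x \neq 0$. By Lemma \ref{lir} there is a $k$-irreducible ideal $J$ of $R$ with $I \subseteq J$ and $x \notin J$; that is, $J \in \mathcal{J}$ but $x \notin J$, contradicting $x \in \bigcap_{J \in \mathcal{J}} J$. Hence $x \in I$, and therefore $\bigcap_{J \in \mathcal{J}} J \subseteq I$. Combining the two inclusions, $I = \bigcap_{J \in \mathcal{J}} J$.
\end{proof}
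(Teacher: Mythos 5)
Your proof is correct and follows essentially the same route as the paper: both arguments reduce the reverse inclusion to Lemma \ref{lir} by taking an element $x$ of the intersection not in $I$ and producing a $k$-irreducible ideal containing $I$ but not $x$, a contradiction. Your explicit handling of the $x=0$ case (and of nonemptiness of the family) is a small tidiness the paper leaves implicit, but the substance is identical.
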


\begin{proof}
By Lemma \ref{lir}, there exists a $k$-irreducible ideal $J$ of  $R $ such that $I\subseteq J$. Suppose that  \[J'=\bigcap_{I\subseteq J} \{J\mid J\;\text{is $k$-irreducible}\}.\]
Then $I\subseteq J'$. We claim that $J'=I.$   If $J'\neq I$, then there exists an $x\in J'\setminus I$, and by Lemma \ref{lir}, there exists a $k$-irreducible ideal $J''$ such that $x\notin J''\supseteq I,$ a contradiction.
\end{proof}

It is well known that in a weakly Noetherian semiring,  $k$-radicals have representation as a finite intersection of $k$-prime ideals. In terms of $k$-irreducible ideals, the following proposition extends that result to any $k$-ideal. Note that this proposition also holds for irreducible ideals of Noetherian commutative rings.

\begin{proposition}
Let $R $ be a weakly Noetherian semiring. Then every $k$-ideal of $R $ can be represented as an intersection of a finite number of $k$-irreducible ideals of $R $.
\end{proposition}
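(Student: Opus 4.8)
The plan is to run the standard Noetherian-induction argument used for irreducible decompositions in Noetherian commutative rings, transported to $k$-ideals via the maximal condition. Let $\Sigma$ be the collection of all $k$-ideals of $R $ that \emph{cannot} be written as a finite intersection of $k$-irreducible ideals of $R $, and suppose, toward a contradiction, that $\Sigma\neq\emptyset$. Since $R $ is weakly Noetherian, the ascending chain condition holds on $\mathcal{I}_k(R) $, and the ascending chain condition on $\mathcal{I}_k(R) $ is equivalent to the maximal condition on $\mathcal{I}_k(R) $; hence the nonempty subcollection $\Sigma$ of $\mathcal{I}_k(R) $ has a member $I$ that is maximal with respect to inclusion.

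I would then observe that $I$ is proper and not $k$-irreducible. It is proper because $R $ is itself $k$-irreducible --- if $J\cap J'=R $ for $k$-ideals $J,J'$ then necessarily $J=J'=R $ --- so $R \notin\Sigma$; and if $I$ were $k$-irreducible it would be its own (one-term) decomposition, contradicting $I\in\Sigma$. Negating the definition of $k$-irreducibility, there are therefore $k$-ideals $J_1$ and $J_2$ with $J_1\cap J_2=I$, $J_1\neq I$, and $J_2\neq I$. Since $I=J_1\cap J_2\subseteq J_i$ and $J_i\neq I$, both inclusions $I\subsetneq J_1$ and $I\subsetneq J_2$ are strict.

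Now maximality of $I$ in $\Sigma$ forces $J_1\notin\Sigma$ and $J_2\notin\Sigma$, so each $J_i$ is a finite intersection of $k$-irreducible ideals; writing $J_1=K_1\cap\dots\cap K_m$ and $J_2=L_1\cap\dots\cap L_n$ with every $K_p$ and $L_q$ a $k$-irreducible ideal, we obtain $I=J_1\cap J_2=K_1\cap\dots\cap K_m\cap L_1\cap\dots\cap L_n$, a finite intersection of $k$-irreducible ideals. This contradicts $I\in\Sigma$, so $\Sigma=\emptyset$, which is exactly the assertion of the proposition. The only general fact invoked is that an intersection of $k$-ideals is again a $k$-ideal, already recorded in \S\ref{kid}.

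I do not anticipate a genuine obstacle here; this is essentially verbatim the Noetherian-decomposition proof. The two points deserving a sentence of care are the passage from the ascending chain condition on $k$-ideals to the maximal condition, so that $\Sigma$ admits a maximal element --- note that one cannot apply Zorn's lemma to $\Sigma$ directly, since the union of a chain in $\Sigma$ need not lie in $\Sigma$ --- and the small bookkeeping remark that $R $, being $k$-irreducible, is never a counterexample, so that the chosen maximal $I$ is genuinely proper (and in particular the decomposition step above does not degenerate).
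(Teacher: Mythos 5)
Your proof is correct and is essentially the same argument as the paper's: both take the family of $k$-ideals admitting no finite $k$-irreducible decomposition, extract a maximal element via the weakly Noetherian hypothesis, split it as an intersection of two strictly larger $k$-ideals, and derive a contradiction. Your added remarks (ACC giving the maximal condition, and $R$ itself being $k$-irreducible so the maximal element is proper) are fine points of care that the paper leaves implicit.
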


\begin{proof}
Suppose that 
\[\mathcal{F}=\left\{J\in  \mathcal{I}_k(R) \mid J\neq \bigcap_{i=1}^n L_i,\;L_i \;\text{is $k$-irreducible}\right\}.\]  It is sufficient to show that $\mathcal{F}=\emptyset$.
Since $R $ is weakly Noetherian, $\mathcal{F}$ has a maximal element $I$. Since $I \in \mathcal{F}$, it is not a finite
intersection of $k$-irreducible ideals of $R $. This implies that $I$ is not $k$-irreducible. Hence, there are $k$-ideals $J$ and $K$ such that  $J\supsetneq I$, $K\supsetneq I$, and $I = J \cap K.$ Since $I$ is
a maximal element of $\mathcal{F}$, we must have $J,$ $K \notin \mathcal{F}.$ Therefore, $J$ and $K$ are a finite intersection of
$k$-irreducible ideals of $R $, which subsequently implies that $I $ is also a finite intersection of $k$-irreducible
ideals of $R $, a contradiction.
\end{proof}

As promised at the beginning of this section, the following result shows relations between prime-type  and irreducible-type $k$-ideals.

\begin{proposition}
Let $R$ be a semiring.
A $k$-ideal $P$ of  $R $ is $k$-prime if and only if it is $k$-semiprime and $k$-strongly irreducible.
		
\end{proposition}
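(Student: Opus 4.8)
The plan is to treat the two implications separately; the forward direction is almost a restatement of facts already noted, and only the converse needs a short argument.

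For the forward implication, suppose $P$ is $k$-prime. Taking $I=J$ in Definition~\ref{kpri} shows at once that $I^{2}\subseteq P$ forces $I\subseteq P$ for every $k$-ideal $I$, so $P$ is $k$-semiprime in the sense of Definition~\ref{kspr}. That $P$ is $k$-strongly irreducible has in fact already been recorded in the text: if $J\cap J'\subseteq P$ with $J,J'\in\mathcal{I}_k(R)$, then by Lemma~\ref{psi} we have $JJ'\subseteq J\cap J'\subseteq P$, and $k$-primeness gives $J\subseteq P$ or $J'\subseteq P$.

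For the converse, assume $P$ is both $k$-semiprime and $k$-strongly irreducible, and let $I,J\in\mathcal{I}_k(R)$ with $IJ\subseteq P$; we want $I\subseteq P$ or $J\subseteq P$. The key observation is that $(I\cap J)^{2}\subseteq IJ$. Indeed, every generator $xy$ with $x,y\in I\cap J$ lies in $\langle\{x'y'\mid x'\in I,\ y'\in J\}\rangle$, so $\langle\{xy\mid x,y\in I\cap J\}\rangle\subseteq\langle\{x'y'\mid x'\in I,\ y'\in J\}\rangle\subseteq IJ$ (the last inclusion because $IJ$ is the $k$-closure of that generated ideal and an ideal is contained in its $k$-closure by Lemma~\ref{lclk}(\ref{iclk})). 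Applying $\mathcal{C}_k$, using monotonicity (Lemma~\ref{lclk}(\ref{ijcl})), the fact that $IJ$ is a $k$-ideal (Lemma~\ref{lclk}(\ref{altd})), and the definition of the product of $k$-ideals, we get $(I\cap J)^{2}=\mathcal{C}_k\big(\langle\{xy\mid x,y\in I\cap J\}\rangle\big)\subseteq\mathcal{C}_k(IJ)=IJ\subseteq P$. Since $I\cap J$ is again a $k$-ideal and $P$ is $k$-semiprime, $I\cap J\subseteq P$; since $P$ is $k$-strongly irreducible, $I\subseteq P$ or $J\subseteq P$. As $P$ is $k$-proper by hypothesis, $P$ is $k$-prime.

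The only point requiring care is the inclusion $(I\cap J)^{2}\subseteq IJ$, where one must remember that the product of two $k$-ideals is the $k$-closure of the naive product rather than the naive product itself; once that is in place, the argument is a direct chase through the properties of $\mathcal{C}_k$ collected in Lemma~\ref{lclk}, and I do not anticipate any real obstacle.
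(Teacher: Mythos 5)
Your proof is correct and follows essentially the same route as the paper: the converse is exactly the paper's argument, passing from $IJ\subseteq P$ to $(I\cap J)^2\subseteq IJ\subseteq P$, then using $k$-semiprimeness and $k$-strong irreducibility. The only differences are cosmetic: you verify the forward direction directly from the definitions and Lemma~\ref{psi} instead of citing Propositions~\ref{eqp}, \ref{eqpp}, and \ref{eqsi}, and you spell out the inclusion $(I\cap J)^2\subseteq IJ$ (which the paper asserts without comment), correctly handling the fact that the product of $k$-ideals is a $k$-closure.
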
 

\begin{proof}
Let $P$ be a $k$-prime ideal of a semiring $R $. Then by Proposition \ref{eqp}, $P$ is a $k$-ideal and a prime ideal of a semiring $R $. This implies that $P$ is $k$-semiprime by Proposition \ref{eqpp}. From Lemma \ref{psi} and Proposition \ref{eqsi}, it follows that $P$ is also $k$-strongly irreducible. Conversely, let $P$ be a $k$-semiprime and $k$-irreducible ideal. Suppose that $I$, $J\in \mathcal{I}_k(R) $ satisfying $IJ\subseteq P$. Then 
\[(I\cap J)^2\subseteq IJ\subseteq P.\]
Since $P$ is $k$-semiprime, this implies that $I\cap J\subseteq P$. But $P$ is also $k$-strongly irreducible, which implies that either $I\subseteq P$ or $J\subseteq P.$\qedhere
	
\end{proof}

For commutative rings, it is known (see \cite[Theorem 2.1(ii)]{A08}) that every proper ideal is contained in a minimal strongly irreducible ideal. Incidentally, the same holds for $k$-strongly irreducible ideals of semirings.

\begin{proposition}
Every $k$-proper ideal of a semiring is contained in a minimal $k$-strongly irreducible ideal.
\end{proposition}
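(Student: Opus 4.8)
The plan is to mimic the standard commutative-ring argument for minimal strongly irreducible ideals (as in \cite[Theorem 2.1(ii)]{A08}), working throughout inside the lattice $\mathcal{I}_k(R)$ of $k$-ideals and exploiting the exchange principle from Proposition \ref{eqsi}. Fix a $k$-proper ideal $I$ of $R$. By Lemma \ref{lir} applied to any nonzero element $x\in R\setminus I$ (such an element exists since $I$ is proper and, if $R\neq 0$, we may take $x=1$; the case $R=0$ is vacuous as then $I$ is not proper), there is at least one $k$-irreducible, hence a fortiori at least one $k$-ideal containing $I$ that is a candidate; but for strong irreducibility we argue by Zorn's lemma directly. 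Consider the family
\[
\mathcal{G}=\{\,J\in\mathcal{I}_k(R)\mid I\subseteq J\text{ and }J\text{ is }k\text{-strongly irreducible}\,\}.
\]
First I would verify $\mathcal{G}\neq\emptyset$: by Lemma \ref{exma}, $I$ is contained in a $k$-maximal ideal $M$, which by Proposition \ref{eqsm} is a maximal $k$-ideal, hence a prime $k$-ideal, hence $k$-prime (as noted in the proof of Proposition \ref{kmkp}), and every $k$-prime ideal is $k$-strongly irreducible by the remark following Proposition \ref{eqsi}. So $M\in\mathcal{G}$.

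Next I would order $\mathcal{G}$ by \emph{reverse} inclusion and check that chains have upper bounds in this order, i.e. that the intersection of a descending chain of $k$-strongly irreducible ideals each containing $I$ is again $k$-strongly irreducible and contains $I$. Containment of $I$ is immediate. For strong irreducibility: let $\{J_\lambda\}_{\lambda\in\Lambda}$ be a chain (totally ordered by inclusion) in $\mathcal{G}$, put $J=\bigcap_\lambda J_\lambda$; this is a $k$-ideal by the remark before Lemma \ref{psi}. Suppose $A\cap B\subseteq J$ with $A,B\in\mathcal{I}_k(R)$ but $A\nsubseteq J$ and $B\nsubseteq J$. Using Proposition \ref{abi} it suffices to find $a,b\in R$ with $\mathcal{C}_k(\langle a\rangle)\cap\mathcal{C}_k(\langle b\rangle)\subseteq J$ while $a\notin J$, $b\notin J$; pick $a\in A\setminus J$ and $b\in B\setminus J$, so $\mathcal{C}_k(\langle a\rangle)\subseteq A$ and $\mathcal{C}_k(\langle b\rangle)\subseteq B$ give $\mathcal{C}_k(\langle a\rangle)\cap\mathcal{C}_k(\langle b\rangle)\subseteq A\cap B\subseteq J$. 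Since $a\notin J=\bigcap_\lambda J_\lambda$ there is $\lambda_0$ with $a\notin J_{\lambda_0}$, and since $b\notin J$ there is $\lambda_1$ with $b\notin J_{\lambda_1}$; by the chain condition one of $J_{\lambda_0},J_{\lambda_1}$ is contained in the other, say $J_{\lambda_0}\subseteq J_{\lambda_1}$, so $a\notin J_{\lambda_1}$ and $b\notin J_{\lambda_1}$, yet $\mathcal{C}_k(\langle a\rangle)\cap\mathcal{C}_k(\langle b\rangle)\subseteq J\subseteq J_{\lambda_1}$, contradicting $k$-strong irreducibility of $J_{\lambda_1}$ via Proposition \ref{abi}. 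Hence $J\in\mathcal{G}$, and Zorn's lemma yields a minimal element of $\mathcal{G}$ with respect to inclusion, which is the desired minimal $k$-strongly irreducible ideal containing $I$.

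The main obstacle, and the step that needs the most care, is the verification that the intersection over a chain stays $k$-strongly irreducible: unlike the ring case one cannot manipulate elements freely, so the reduction to the elementwise criterion of Proposition \ref{abi} together with the $k$-closure bookkeeping ($a\in A\Rightarrow\mathcal{C}_k(\langle a\rangle)\subseteq A$, valid since $A$ is already a $k$-ideal by Lemma \ref{lclk}(\ref{altd}) and Lemma \ref{lclk}(\ref{iclk})) is what makes the chain argument go through. Everything else is a routine invocation of Lemma \ref{exma}, Proposition \ref{eqsm}, the prime/$k$-prime identifications already used in Proposition \ref{kmkp}, and Zorn's lemma. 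I would also remark that no weakly Noetherian hypothesis is needed here, in contrast to the finite-intersection decomposition results preceding this proposition.
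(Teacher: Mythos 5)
Your proof follows essentially the same route as the paper: apply Zorn's lemma to the family of $k$-strongly irreducible ideals containing $I$, with nonemptiness supplied by a $k$-maximal ideal above $I$ (Lemma \ref{exma}); your justification that a $k$-maximal ideal is $k$-strongly irreducible goes via $k$-primeness (as in the proof of Proposition \ref{kmkp}) rather than the paper's appeal to maximal ideals being strongly irreducible plus Propositions \ref{eqsm} and \ref{eqsi}, but both routes are fine. In fact you are more careful than the paper, which never checks that descending chains in this family have lower bounds in the family; your reduction to the elementwise criterion of Proposition \ref{abi}, using $\mathcal{C}_k(\langle a\rangle)\subseteq A$ for $a\in A$ with $A$ a $k$-ideal, is exactly the missing verification, so this is a genuine improvement in completeness.

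One small slip in the chain argument needs fixing: with $a\notin J_{\lambda_0}$, $b\notin J_{\lambda_1}$ and, say, $J_{\lambda_0}\subseteq J_{\lambda_1}$, you cannot conclude $a\notin J_{\lambda_1}$ (the element $a$ may well lie in the larger ideal). The contradiction must be located in the \emph{smaller} ideal: from $b\notin J_{\lambda_1}$ and $J_{\lambda_0}\subseteq J_{\lambda_1}$ you get $b\notin J_{\lambda_0}$, and together with $a\notin J_{\lambda_0}$ and $\mathcal{C}_k(\langle a\rangle)\cap\mathcal{C}_k(\langle b\rangle)\subseteq J\subseteq J_{\lambda_0}$ this contradicts $k$-strong irreducibility of $J_{\lambda_0}$ via Proposition \ref{abi}. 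With that index corrected, your argument is complete.
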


\begin{proof}
Let $I$ be a $k$-proper ideal of a semiring $R $. Consider the chain $(\mathcal{E}, \subseteq)$, where \[\mathcal{E}=\{J\mid I\subseteq J,\;J\;\text{is $k$-strongly irreducible}\}.\]
Since every maximal ideal of a semiring $R $ is strongly irreducible, by Proposition \ref{eqsm} and Proposition \ref{eqsi}, every $k$-maximal ideal is $k$-strongly irreducible, and by Lemma \ref{exma}, every proper $k$-ideal is contained in a $k$-maximal ideal. Therefore the set $\mathcal{E}$ is nonempty. By Zorn's lemma, $\mathcal{E}$ has a minimal element, which is our desired minimal $k$-strongly irreducible ideal.
\end{proof}

The following result shows when all $k$-ideals of a semiring are $k$-strongly irreducible, and its proof is obvious.

\begin{proposition}
Every $k$-ideal of a semiring $R$ is $k$-strongly irreducible if and only if  $\mathcal{I}_k(R) $ is totally ordered. 
\end{proposition}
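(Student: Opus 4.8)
The plan is to prove the two implications separately, both being quite short, so the main work is just organizing the two elementary set-theoretic arguments and citing the fact (recorded just after Definition \ref{adki}) that an arbitrary intersection of $k$-ideals is again a $k$-ideal.

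For the implication that a totally ordered $\mathcal{I}_k(R)$ forces every $k$-ideal to be $k$-strongly irreducible, I would start with a $k$-ideal $I$ and $k$-ideals $J$, $J'$ with $J\cap J'\subseteq I$. Since $\mathcal{I}_k(R)$ is totally ordered, $J$ and $J'$ are comparable; without loss of generality $J\subseteq J'$, so $J\cap J'=J$, and hence $J\subseteq I$. This is exactly the defining condition of $k$-strong irreducibility.

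For the converse, suppose every $k$-ideal of $R$ is $k$-strongly irreducible, and take arbitrary $k$-ideals $J$, $J'$. Their intersection $I:=J\cap J'$ is a $k$-ideal, hence $k$-strongly irreducible by hypothesis. Applying the definition of $k$-strong irreducibility to the (trivial) inclusion $J\cap J'\subseteq I$ yields $J\subseteq I$ or $J'\subseteq I$; in the first case $J\subseteq J'$, in the second $J'\subseteq J$. Thus any two $k$-ideals are comparable, i.e.\ $\mathcal{I}_k(R)$ is totally ordered.

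I do not anticipate a genuine obstacle here — the only point requiring care is to invoke that $J\cap J'\in\mathcal{I}_k(R)$ so that the hypothesis of $k$-strong irreducibility is applicable to it — which matches the paper's remark that the proof is obvious.
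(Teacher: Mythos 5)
Your proof is correct, and since the paper simply declares the statement obvious, your two elementary arguments (comparability gives $J\cap J'=\min(J,J')$, and applying $k$-strong irreducibility of $J\cap J'$ to the trivial inclusion $J\cap J'\subseteq J\cap J'$) are exactly the intended reasoning. The one point you flag—that $J\cap J'\in\mathcal{I}_k(R)$ so the hypothesis applies—is indeed the only thing needing citation, and the paper records it right before Lemma \ref{psi}.
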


We conclude this section with a theorem on arithmetical semirings, where $k$-irreducibility and $k$-strongly irreducibility coincide.

\begin{theorem}
In a arithmetic semiring $R $, a $k$-ideal of a semiring $R $ is
$k$-irreducible if and only if it is $k$-strongly irreducible. Conversely, 
if a $k$-irreducible ideal of a semiring $R $ is $k$-strongly
irreducible, then $R $ is arithmetic.
\end{theorem}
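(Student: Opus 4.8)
The plan is to translate the statement into lattice theory. As noted earlier, $\mathcal{I}_k(R)$ is a complete modular lattice: arbitrary intersections of $k$-ideals are again $k$-ideals, so all meets, hence all joins, exist. In this lattice a $k$-ideal is $k$-irreducible exactly when it is meet-irreducible and $k$-strongly irreducible exactly when it is meet-prime, and I use the characterization of arithmetic semirings as those $R$ for which $\mathcal{I}_k(R)$ is distributive. So the theorem becomes: in a distributive (complete) lattice every meet-irreducible element is meet-prime; and conversely, if in $\mathcal{I}_k(R)$ every meet-irreducible element is meet-prime, then $\mathcal{I}_k(R)$ is distributive.

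For the direct assertion, one direction is already recorded in the text (every $k$-strongly irreducible ideal is $k$-irreducible). For the other, let $R$ be arithmetic and let $L$ be $k$-irreducible with $J\cap K\subseteq L$ for $k$-ideals $J,K$. Then $J\cap K\subseteq L$ gives $L=L\vee(J\cap K)$, and distributivity yields
\[
L=L\vee(J\cap K)=(L\vee J)\cap(L\vee K).
\]
By $k$-irreducibility of $L$, either $L\vee J=L$ or $L\vee K=L$, i.e. $J\subseteq L$ or $K\subseteq L$; hence $L$ is $k$-strongly irreducible. Note this uses only meet-irreducibility for finite meets, which is precisely the definition of $k$-irreducible, so nothing beyond distributivity of $\mathcal{I}_k(R)$ is needed.

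For the converse, assume every $k$-irreducible $k$-ideal is $k$-strongly irreducible. By the representation result proved above (which rests on Lemma~\ref{lir}), every $k$-proper ideal of $R$ is an intersection of $k$-irreducible ideals, and $R$ is the empty intersection; hence every $k$-ideal of $R$ is an intersection of meet-prime elements of $\mathcal{I}_k(R)$. It then suffices to prove that a complete lattice in which every element is a meet of meet-prime elements is distributive. For $a,b,c\in\mathcal{I}_k(R)$ only the inclusion $a\cap(b\vee c)\subseteq(a\cap b)\vee(a\cap c)$ requires proof. If it failed, set $x=(a\cap b)\vee(a\cap c)$; since $x$ equals the intersection of the meet-prime $k$-ideals containing it while $x\not\supseteq a\cap(b\vee c)$, there is a meet-prime $p\supseteq x$ with $p\not\supseteq a\cap(b\vee c)$. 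From $p\supseteq a\cap b$ and $p\supseteq a\cap c$, meet-primeness gives either $p\supseteq a$, whence $p\supseteq a\cap(b\vee c)$, or $p\supseteq b$ and $p\supseteq c$, whence $p\supseteq b\vee c\supseteq a\cap(b\vee c)$; either way we contradict the choice of $p$. Hence $\mathcal{I}_k(R)$ is distributive, i.e. $R$ is arithmetic.

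The forward half is essentially formal. The step I expect to be the real content is the lattice lemma in the converse --- that a complete lattice all of whose elements are meets of meet-prime elements must be distributive --- where the meet-decompositions into $k$-irreducible ideals come from the earlier representation proposition and the hypothesis upgrades ``$k$-irreducible'' to ``meet-prime''. The points to handle with care are matching the two irreducibility notions to the correct (finite versus arbitrary) meets in each half, and, at the outset, pinning down the definition/characterization of an arithmetic semiring that is being used.
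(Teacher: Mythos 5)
Your argument is sound as a lattice-theoretic proof of the statement ``every $k$-irreducible $k$-ideal of $R$ is $k$-strongly irreducible if and only if $\mathcal{I}_k(R)$ is distributive'', and it is a genuinely different route from the paper's: the paper simply quotes Is\'{e}ki's Theorems 3 and 7 of \cite{I56} (about ordinary irreducible and strongly irreducible ideals of an arithmetic semiring) and transfers them to $k$-ideals through the exchange principle of Proposition \ref{eqsi}, whereas you argue directly inside the complete lattice $\mathcal{I}_k(R)$, feeding the representation proposition (via Lemma \ref{lir}) into the standard ``meets of meet-primes imply distributivity'' lemma. Both halves of your lattice argument are correct as written.

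The point you flagged at the end is, however, where the genuine gap lies. The paper never defines ``arithmetic'', and in the sources it leans on (\cite{I56}, \cite{G99}) an arithmetic semiring is one whose lattice of \emph{all} ideals $\mathcal{I}(R)$ is distributive, not $\mathcal{I}_k(R)$. Under that reading your forward half survives after one missing bridge: since $\mathcal{C}_k$ commutes with finite intersections (Lemma \ref{lclk}(\ref{arbin})), distributivity of $\mathcal{I}(R)$ descends to the closed elements, because $A\cap\mathcal{C}_k(B+C)=\mathcal{C}_k(A\cap(B+C))=\mathcal{C}_k((A\cap B)+(A\cap C))$ for $k$-ideals $A,B,C$. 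But your converse only produces distributivity of $\mathcal{I}_k(R)$, which is strictly weaker and does not yield ``$R$ is arithmetic'' in that sense. The semiring $\mathds{N}$ makes the difference concrete: its $k$-ideals are $0$, $\mathds{N}$ and the ideals $n\mathds{N}$, so $\mathcal{I}_k(\mathds{N})$ is the divisibility lattice (distributive) and every $k$-irreducible ideal is $k$-strongly irreducible, yet $8\in 2\mathds{N}\cap(3\mathds{N}+5\mathds{N})$ while $8\notin(2\mathds{N}\cap 3\mathds{N})+(2\mathds{N}\cap 5\mathds{N})=\langle 6,10\rangle$, so $\mathcal{I}(\mathds{N})$ is not distributive. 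So either the theorem is to be read with ``arithmetic'' meaning $\mathcal{I}_k(R)$ distributive, in which case your proof is complete and in fact more self-contained than the paper's (whose converse appeals to Is\'{e}ki's Theorem 7, a statement whose hypothesis concerns \emph{all} irreducible ideals, not only the subtractive ones covered by the stated assumption), or else the converse requires passing from the subtractive-ideal lattice to the full ideal lattice, which your argument does not supply and which the $\mathds{N}$ example shows cannot be done in general.
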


\begin{proof}
It has been shown in \cite[Theorem 3]{I56} that in an arithmetic semiring, irreducible and strongly irreducible ideals are equivalent. Thanks to Proposition \ref{eqsi}, we have then our claim. For the converse, \cite[Theorem 7]{I56} says that if irreducibility implies strongly irreducibility, then the semiring is arithmetic. Once again, applying Proposition \ref{eqsi} on this result, we get the converse.
\end{proof}

\begin{corollary}
In an arithmetic semiring, any $k$-ideal is the intersection of all $k$-strongly irreducible ideals containing it.
\end{corollary}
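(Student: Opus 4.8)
The plan is to deduce this directly from two already-established facts: the representation of a $k$-proper ideal as an intersection of the $k$-irreducible ideals containing it (the proposition immediately preceding the last theorem), and the coincidence of $k$-irreducibility with $k$-strong irreducibility for $k$-ideals of an arithmetic semiring (the last theorem). No new technique is needed; the corollary is essentially a translation of the earlier representation theorem through this equivalence.

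First I would dispose of the improper case. If $I=R$, then $R$ is vacuously $k$-strongly irreducible, so the intersection of all $k$-strongly irreducible ideals of $R$ containing $R$ is $R$ itself, and the statement holds trivially. So assume henceforth that $I$ is a $k$-proper ideal of the arithmetic semiring $R$.

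Next, by the proposition on representations of $k$-proper ideals, we may write $I=\bigcap_{I\subseteq J}J$, where $J$ ranges over the $k$-irreducible ideals of $R$ containing $I$. Now I would invoke the last theorem: in an arithmetic semiring a $k$-ideal is $k$-irreducible if and only if it is $k$-strongly irreducible. Since both the notion of $k$-irreducible ideal and that of $k$-strongly irreducible ideal already presuppose that the ideal in question is a $k$-ideal, this equivalence identifies, as sets, the family of $k$-irreducible ideals containing $I$ with the family of $k$-strongly irreducible ideals containing $I$. Substituting the latter family for the former in the displayed intersection yields $I=\bigcap_{I\subseteq J}J$ with $J$ now ranging over the $k$-strongly irreducible ideals of $R$ containing $I$, which is precisely the claim.

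Because each step is an appeal to a result proved earlier in the paper, there is no real obstacle; the only points deserving a moment's care are the trivial handling of $I=R$ and the remark that the two irreducibility notions both incorporate the $k$-ideal condition, so that the theorem's equivalence genuinely identifies the two families of subtractive ideals over which the intersection is taken.
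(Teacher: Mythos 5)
Your proposal is correct and is exactly the intended derivation: the paper presents this as an immediate consequence of the representation of a $k$-proper ideal as the intersection of the $k$-irreducible ideals containing it, combined with the preceding theorem identifying $k$-irreducible and $k$-strongly irreducible ideals in an arithmetic semiring. Your explicit treatment of the case $I=R$ is a harmless (and reasonable) extra precaution.
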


\section{$k$-extensions and $k$-contractions} \label{kekc}

The aim of this final section is to study the properties of $k$-ideals and their products, intersections, and ideal quotients under semiring homomorphisms. These properties are `$k$-idealic' extensions of their (commutative) ring-theoretic versions
(see \cite[Proposition 1.17 and Exercise 1.18]{AM69}).

\begin{definition}
Suppose that  $\phi\colon R \to R '$ is a semiring homomorphism.
\begin{enumerate}[\upshape (1)]
					
\item If $J$ is a $k$-ideal of $R '$, then the \emph{$k$-contraction of} $J$, denoted by $J^c$, is defined by $\phi\inv (J).$
		
\item If $I$ is a $k$-ideal of  $R $, then the \emph{$k$-extension of} $I$, denoted by $I^e$, is defined by $\mathcal{C}_k(\langle \phi(I)\rangle).$ 
\end{enumerate}
\end{definition} 

\begin{theorem}\label{cep}
	
Let $\phi\colon R \to R '$ be a semiring homomorphism. For $k$-ideals $I,$ $I_1$, and $I_2$ of  $R $, and $k$-ideals $J,$ $J_1$, and $J_2$  of $R '$, the following hold.
	
\begin{enumerate}[\upshape (1)]
		
\item\label{jcki}  $J^c$ is a $k$-ideal of  $R $.
		
\item The kernel $\mathrm{ker}\phi$ is a
$k$-ideal of $R .$
		
\item $I^e$ is a $k$-ideal of $R '$.
		
\item\label{ijec} $\mathrm{(a)}\; I\subseteq I^{ec}$ $\mathrm{(b)}\; J\supseteq J^{ce}$ $\mathrm{(c)}\; J^c=J^{cec}$ $\mathrm{(d)}\; I^e=I^{ece}.$
		
\item\label{bij} There is a bijection between the sets $\{I\mid I^{ec}=I\}$ and $\{J\mid J^{ce}=J\}$.
		
\item\label{dri} $\mathrm{(a)}\; (I_1\cap I_2)^e\subseteq I_1^e\cap I_2^e$ $\mathrm{(b)}\; (I_1I_2)^e=I_1^eI_2^e$ $\mathrm{(c)}\; (I_1:I_2)^e\subseteq (I_1^e:I_2^e)$ $\mathrm{(d)}\; (\mathcal{R}_k(I))^e\subseteq \mathcal{R}_k(I)^e.$
		
\item\label{crj} $\mathrm{(a)}\; (J_1\cap J_2)^e= J_1^e\cap J_2^e$ $\mathrm{(b)}\; (J_1J_2)^e\supseteq J_1^eJ_2^e$ $\mathrm{(c)}\; (J_1:J_2)^e\subseteq (J_1^e:J_2^e)$ $\mathrm{(d)}\; (\mathcal{R}_k(J))^e= \mathcal{R}_k(J)^e.$
		
\item\label{rjc} 
$\mathcal{R}_k(J)^c\subseteq \mathcal{R}_k(J^c).$
\end{enumerate}	
\end{theorem}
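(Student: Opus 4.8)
The plan is to dispatch parts (1)--(5) by essentially formal means. For (1)--(3): $J^{c}=\phi^{-1}(J)$ is an ideal of $R$ in the usual way, and if $x,x+y\in\phi^{-1}(J)$ then $\phi(x),\phi(x)+\phi(y)=\phi(x+y)\in J$ forces $\phi(y)\in J$ because $J$ is a $k$-ideal, so $y\in\phi^{-1}(J)$; part (2) is the case $J=0$; and (3) holds because $\mathcal{C}_{k}(\langle\phi(I)\rangle)$ is a $k$-ideal by Lemma \ref{lclk}(\ref{iclk}). For (4)--(5) I would first record that $e$ and $c$ are both inclusion-preserving (for $e$, apply $\phi(-)$, then $\langle-\rangle$, then Lemma \ref{lclk}(\ref{ijcl})), that $I\subseteq I^{ec}$ because $\phi(I)\subseteq\langle\phi(I)\rangle\subseteq\mathcal{C}_{k}(\langle\phi(I)\rangle)$, and that $J^{ce}\subseteq J$ because $\phi(\phi^{-1}(J))\subseteq J$ implies $\langle\phi(\phi^{-1}(J))\rangle\subseteq J$ and hence $\mathcal{C}_{k}$ of it lies in $\mathcal{C}_{k}(J)=J$ by Lemma \ref{lclk}(\ref{altd}). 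These four facts make $(e,c)$ a monotone Galois connection: (4)(a)(b) are two of them, (4)(c)(d) follow by composing them in the standard way (apply $c$ to $J^{ce}\subseteq J$ and $I\subseteq I^{ec}$ with $I:=J^{c}$, and symmetrically), and (5) is the usual bijection between the closed points of a Galois connection, realized by $I\mapsto I^{e}$ and $J\mapsto J^{c}$, with $(I^{e})^{ce}=I^{e}$ and $(J^{c})^{ec}=J^{c}$ coming from (4)(c)(d).

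For parts (6) and (7): the one-sided inclusions $(I_{1}\cap I_{2})^{e}\subseteq I_{1}^{e}\cap I_{2}^{e}$, $(I_{1}\colon I_{2})^{e}\subseteq(I_{1}^{e}\colon I_{2}^{e})$, and their $J$-counterparts, all follow from monotonicity; for the ideal-quotient ones one first notes via Proposition \ref{icj} that the right-hand sides are already $k$-ideals, so it suffices to check elementwise that $r\in(I_{1}\colon I_{2})$ forces $\phi(r)I_{2}^{e}\subseteq I_{1}^{e}$, which uses only that $\phi(r)\phi(I_{2})\subseteq\phi(I_{1})$ and that multiplying by a fixed element respects $\mathcal{C}_{k}$. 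The substantive equality is $(I_{1}I_{2})^{e}=I_{1}^{e}I_{2}^{e}$; for this I would prove the auxiliary fact that $\mathcal{C}_{k}(\langle\phi(\mathcal{C}_{k}(K))\rangle)=\mathcal{C}_{k}(\langle\phi(K)\rangle)$ for every ideal $K$ of $R$ (one inclusion is monotonicity; for the other, $r+x\in K$ with $x\in K$ gives $\phi(r)+\phi(x)\in\langle\phi(K)\rangle$ with $\phi(x)\in\langle\phi(K)\rangle$, so $\phi(\mathcal{C}_{k}(K))\subseteq\mathcal{C}_{k}(\langle\phi(K)\rangle)$, and applying $\mathcal{C}_{k}\langle-\rangle$ with idempotence finishes it). Since the $k$-ideal product is $I_{1}I_{2}=\mathcal{C}_{k}(I_{1}\cdot I_{2})$ with $I_{1}\cdot I_{2}$ the ordinary product ideal, this auxiliary fact reduces $(I_{1}I_{2})^{e}$ to $\mathcal{C}_{k}(\langle\phi(I_{1}\cdot I_{2})\rangle)=\mathcal{C}_{k}(\langle\phi(I_{1})\rangle\langle\phi(I_{2})\rangle)$ (the last step being the routine equality $\langle\phi(I_{1}\cdot I_{2})\rangle=\langle\phi(I_{1})\rangle\langle\phi(I_{2})\rangle$ of ideals), which in turn equals $I_{1}^{e}I_{2}^{e}=\mathcal{C}_{k}(\mathcal{C}_{k}(\langle\phi(I_{1})\rangle)\cdot\mathcal{C}_{k}(\langle\phi(I_{2})\rangle))$ by Lemma \ref{lclk}(\ref{clijk}) and idempotence. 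The $J$-parts run identically, also using that $\phi^{-1}$ commutes with finite intersections.

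For parts (6)(d) and (8) I would invoke the description of $\mathcal{R}_{k}$ as the intersection of the $k$-prime ideals above the given $k$-ideal (Definition \ref{drad}) together with the observation that $\phi^{-1}(P)$ is $k$-prime in $R$ whenever $P$ is $k$-prime in $R'$: it is a $k$-ideal by part (1), it is proper because $\phi(1)=1\notin P$, and it is prime---hence $k$-prime by Proposition \ref{eqp}---since $ab\in\phi^{-1}(P)$ gives $\phi(a)\phi(b)\in P$. For (6)(d), to get $(\mathcal{R}_{k}(I))^{e}\subseteq\mathcal{R}_{k}(I^{e})$ it suffices, the right side being a $k$-ideal, to show $\phi(\mathcal{R}_{k}(I))\subseteq P$ for every $k$-prime $P\supseteq I^{e}$; but then $I\subseteq I^{ec}\subseteq\phi^{-1}(P)$, so $\mathcal{R}_{k}(I)\subseteq\phi^{-1}(P)$ and the claim follows. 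For (8) I would instead use Lemma \ref{rkt}: if $\phi(r)\in\mathcal{R}_{k}(J)$ and $S\ni r$ is a multiplicatively closed subset of $R$, then $\phi(S)$ is multiplicatively closed in $R'$ and contains $\phi(r)$, hence meets $J$, hence $S$ meets $\phi^{-1}(J)=J^{c}$, giving $r\in\mathcal{R}_{k}(J^{c})$. I expect the single genuine obstacle to be the product equality of the previous paragraph: the $k$-ideal product silently carries a $\mathcal{C}_{k}$, so one must be disciplined about when a $\mathcal{C}_{k}$ may be inserted or suppressed, and the auxiliary lemma $\mathcal{C}_{k}(\langle\phi(\mathcal{C}_{k}(K))\rangle)=\mathcal{C}_{k}(\langle\phi(K)\rangle)$ is the device that keeps this bookkeeping under control; everything else is either Galois-connection formalism or a short elementwise verification.
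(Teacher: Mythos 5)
Your proposal is correct, and while the overall decomposition is forced by the list structure of the theorem, several of your arguments genuinely differ from the paper's. For (1) the paper simply cites \cite[Proposition 10.11]{G99}, whereas you verify the subtractivity of $\phi^{-1}(J)$ directly; for (4)--(5) the paper argues elementwise (picking $r'+x\in\langle\phi(J^c)\rangle$, etc.), while you package the two basic inclusions $I\subseteq I^{ec}$, $J^{ce}\subseteq J$ together with monotonicity into a Galois connection and read off (4)(c), (4)(d) and the bijection (5) formally --- shorter and reusable. In (6) the paper again proceeds elementwise for each item; you instead get the one-sided inclusions from monotonicity, handle the ideal quotient by showing $\phi(r)\,\mathcal{C}_k(\langle\phi(I_2)\rangle)\subseteq \mathcal{C}_k(\langle\phi(I_1)\rangle)$ and invoking Proposition \ref{icj}, and, most notably, prove the equality $(I_1I_2)^e=I_1^eI_2^e$ via the auxiliary identity $\mathcal{C}_k(\langle\phi(\mathcal{C}_k(K))\rangle)=\mathcal{C}_k(\langle\phi(K)\rangle)$ combined with Lemma \ref{lclk}(\ref{clijk}) and idempotence. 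This is a real improvement in completeness: the paper's proof of (6)(b) says only that it is ``similar to (6)(a)'', but (6)(a) is a one-sided inclusion, so the reverse inclusion needed for the stated equality is left implicit there, and the $\mathcal{C}_k$ hidden in the definition of the $k$-product is exactly the bookkeeping your auxiliary lemma controls. For (6)(d) you (reasonably) read the statement as $(\mathcal{R}_k(I))^e\subseteq\mathcal{R}_k(I^e)$ --- as printed it is a tautology, evidently a typo, as is the use of $e$ rather than $c$ on the $J$'s in (7) --- and prove it via the observation that contractions of $k$-prime ideals are $k$-prime (using Proposition \ref{eqp}), which is cleaner than the paper's sketch; for (8) you use the multiplicative-set characterization of Lemma \ref{rkt} where the paper contracts the defining intersection of $\mathcal{R}_k(J)$. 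Both routes are sound; the paper's stays closer to the raw definition of $\mathcal{C}_k$, while yours is more structural and fills in the equality claims explicitly.
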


\begin{proof}
(1)  \cite[Proposition 10.11]{G99}.
	
(2) Since $0$ is a $k$-ideal of $R'$, the claim follows from (1).
	
(3) Follows from Lemma \ref{lclk}(\ref{clcl})
	
(4) For (\ref{ijec}.a), observe that $\phi(I)\subseteq \mathcal{C}_k(\langle \phi(I)\rangle),$ whence \[I\subseteq \phi(I)^c\subseteq \mathcal{C}_k(\langle \phi(I)\rangle)^c.\] To prove (\ref{ijec}.b), let $r'\in J^{ce}=\mathcal{C}_k(\langle J^c\rangle)$. Then there exists an $x\in \langle \phi(J^c)\rangle$ such that $r'+x\in \langle \phi(J^c)\rangle.$ Now, $x\in \langle \phi(J^c)\rangle$ implies $x=\sum_i r'_i\phi(y_i),$ for $r'_i\in R '$ and $y_i\in J^c,$ which subsequently implies $\phi(y_i)\in J$ and hence $x\in J$. With a similar argument, we have $r'+x\in J$. Since $J$ is a $k$-ideal, this means $r'\in J$, as required. Using (\ref{ijec}.a) and (\ref{ijec}.b), we have the proofs of (\ref{ijec}.c) and (\ref{ijec}.d). 
	
(5) By considering the maps $I\mapsto I^e$ and $J\mapsto J^c$, the proof follows.
	
(6) To obtain (\ref{dri}.a), let $r'\in (I_1\cap I_2)^e=\mathcal{C}_k(\langle \phi(I_1\cap I_2)\rangle).$ This implies that $r'+x\in \langle \phi(I_1\cap I_2)\rangle$ for some $x\in \langle \phi(I_1\cap I_2)\rangle,$ and hence $x=\sum_ir'_i\phi(x_i),$ where $x_i\in I_1\cap I_2$ and $r'_i\in R '$. This shows that $$\phi(x_i)\in \mathcal{C}_k(\langle \phi(I_1)\rangle)\cap \mathcal{C}_k(\langle \phi(I_2)\rangle),$$ and hence $x\in I_1^e\cap I_2^e$. Similarly, we obtain $r'+x\in I_1^e\cap I_2^e.$ Since $I_1^e\cap I_2^e$ is a $k$-ideal (since so are $I_1^e$, $I_2^e$, and hence their intersection), this implies that $r'\in I_1^e\cap I_2^e.$ The proof of (\ref{dri}.b) is similar to (\ref{dri}.a), where the main difference is the use of the homomorphism property: $\phi(I_1I_2)=\phi(I_1)\phi(I_2).$ 
	
To show (\ref{dri}.c), suppose that  $r'\in (I_1:I_2)^e=\mathcal{C}_k(\langle \phi(I_1:I_2)\rangle).$ From the definition of $\mathcal{C}_k$, we have $r'+x\in \langle \phi(I_1:I_2)\rangle$ for some $x\in \langle \phi(I_1:I_2)\rangle$. But this means $x=\sum_ir'_i\phi(r_i),$ where $r'_i\in R '$ and $r_i\in (I_1:I_2).$ From this we obtain $\phi(r_i)\in (\phi(I_1):\phi(I_2)).$ Therefore, \[x\in \left(\mathcal{C}_k(\langle \phi(I_1)\rangle) : \mathcal{C}_k(\langle \phi(I_2)\rangle)\right).\]
Similarly, we can show that $r'+x \in (I_1^e:I_2^e).$ Since by Proposition \ref{icj}, $(I_1^e:I_2^e)$ is a $k$-ideal, we must have $r'\in (I_1^e:I_2^e),$ and proves the claim. 
	
Finally, to have (\ref{dri}.d), following the above-mentioned method, we get an $x\in \langle \phi(\mathcal{R}_k(I))\rangle$, which implies $x=\sum_ir_i'\phi(x_i),$ where $r_i'\in R '$ and $x_i\in \mathcal{R}_k(I)$ and hence $x_i\in P$ for all $P\supseteq I$. This implies $\phi(x_i)\in \phi(P)\supseteq \phi(I)$ for all $P\supseteq I$. The argument of the rest of the proof is similar as above.
	
(7) The proofs of all the identities  are analogous to that of (6).
	
(8) Notice that if $y\in \mathcal{R}_k(J)^c,$ then $y\in P$ for all $P$ such that $P\supseteq J^c$, where $P\in \mathrm{spec}_k(R) .$ Since
\[\mathcal{R}^c_kJ=\left( \bigcap_{J\subseteq Q}Q \right)^c=\bigcap_{J^c\subseteq Q^c}  Q^c,\]
we have $y\in \mathcal{R}^c_kJ$. 
\end{proof}

\begin{cremark}
We conclude the paper with the following remarks:
	
$\bullet$ In the sequel of this paper, in \cite{DG23}, we shall explore the behaviour of various types of $k$-ideals under quotients and localizations of semirings. Moreover, in that paper, we aim to study $k$-ideals in special types of semirings, namely, idempotent semirings, Gel'fand semirings, Boolean semirings,  and lattice-ordered semirings.  
	
$\bullet$ In \cite{G99}, a comprehensive study of modules over a semiring has been done, and also the notion of a subtractive semimodule has been introduced. Similar to the distinguished classes of $k$-ideals that we have considered here, one can introduce the same for subtractive semimodules and study their properties. 
	
$\bullet$ Using the $k$-closure operation, we may endow a subbasic closed-set topology on $\mathcal{I}\!(R) $ and study the topological properties of the corresponding spaces. This work has been initiated in \cite{G23}. However, a similar study can also be done for $h$-closure operations. Since there is a nice relationship between $k$-closure and $h$-closure operations (see \cite{H15}), it will be interesting to find topological connections between the respective spaces.
\end{cremark}


\end{document}